\numberwithin{equation}{section}
\newtheorem{theorem}{Theorem}
\newtheorem{lemma}[theorem]{Lemma}
\newtheorem{corollary}[theorem]{Corollary}
\newcommand{\R}{\mathbb{R}}
\newcommand{\N}{\mathbb{N}}
\renewcommand{\varpi}{\omega}
\renewcommand{\le}{\leqslant}
\renewcommand{\ge}{\geqslant}
\renewcommand{\epsilon}{\varepsilon}
\newcommand{\const}{\,{\rm const}\,}
\begin{document}

\title{Flatness results for nonlocal minimal cones and subgraphs}\thanks{Supported by
the Australian Research Council 
Discovery Project DP170104880 ``N.E.W. --
Nonlocal Equations at Work''. It is a pleasure to thank Serena Dipierro for very interesting
discussions.}

\author{Alberto Farina}

\address{{\em Alberto Farina:} LAMFA,
CNRS UMR 7352,
Facult\'e des Sciences,
Universit\'e de Picardie Jules Verne,
33 rue Saint Leu,
80039 Amiens CEDEX 1, France.
Email: {\tt alberto.farina@u-picardie.fr}}

\author{Enrico Valdinoci}

\address{{\em Enrico Valdinoci:} School of Mathematics and Statistics,
University of Melbourne, 813 
Swanston St, Parkville VIC 3010, Australia,
and Dipartimento di Matematica, Universit\`a degli studi di Milano,
Via Saldini 50, 20133 Milan, Italy, and
Istituto di Matematica Applicata e Tecnologie Informatiche,
Via Ferrata 1, 27100 Pavia, Italy. Email: {\tt enrico@mat.uniroma3.it}}

\keywords{Nonlocal minimal surfaces, Bernstein problem, regularity, rigidity, classification.}
\subjclass[2010]{35R11, 53A10, 49Q05.}

\begin{abstract}
We show that
nonlocal minimal cones which are non-singular subgraphs
outside the origin are necessarily halfspaces.

The proof is based on classical ideas of~\cite{DG1}
and on the computation of the linearized nonlocal mean curvature operator,
which is proved to satisfy a suitable maximum principle.

With this, we obtain new, and somehow simpler, proofs of the Bernstein-type
results for nonlocal minimal surfaces which have been recently established in~\cite{FV}.
In addition, we establish a new nonlocal Bernstein-Moser-type result
which classifies Lipschitz
nonlocal minimal subgraphs outside a ball.
\end{abstract}

\maketitle

\section{Introduction}

Recently, a Bernstein-type problem for nonlocal minimal surfaces has been settled in~\cite{FV}.
The two main results of~\cite{FV} consist in:
\begin{itemize}
\item first, a ``Lipschitz implies $C^\infty$'' regularity result for nonlocal minimal surfaces,
\item then, a ``no singular cones implies Bernstein Theorem in one more dimension''.
\end{itemize}
The precise statements of these results will be explicitly recalled later on, in Theorems~\ref{FV1.1}
and~\ref{FV1.2}. {F}rom these results, one obtains also interesting byproducts, such as
the fact that $s$-minimal subgraphs are necessarily flat if the ambient space has
dimension less than or equal to~$3$, or less than or equal to~$8$
under the additional assumption that the fractional parameter is large enough.\medskip

The goal of this paper is to 
establish that {\em nonlocal minimal cones with the structure of a non-singular
subgraph outside the origin
are necessarily flat}. Interestingly, this result holds {\em in any dimension}
and provides also a nonlocal version of the
Bernstein Theorem for classical minimal surfaces
proved by J.~Moser in~\cite{MOS}.
As a byproduct, we also obtain new results for~$s$-minimal subgraphs outside a ball.
Furthermore,
the approach of this paper presents an alternative, and somehow simpler, proof
of the results in~\cite{FV}.
\medskip

The mathematical setting in which we work is that introduced in~\cite{CRS},
that we now recall.
We consider an ambient space of dimension~$N:=n+1\in\N$.
For any disjoint (measurable) subsets $X$ and~$Y$ of~$\R^N$ and any~$s\in(0,1)$,
we consider the $s$-interaction of~$X$ and~$Y$, defined by
$$ {\mathcal{I}}_s(X,Y):=\iint_{X\times Y}\frac{dx\,dy}{|x-y|^{N+s}}.$$
Given a (say, bounded and Lipschitz) set~$\Omega\subset\R^N$,
and~$E\subseteq\R^N$, one defines the $s$-perimeter of~$E$ in~$\Omega$
as the sum of all the interactions of~$E$ and~$E^c:=\R^n\setminus\Omega$
to which the domain~$\Omega$ contributes, namely
$$ {\rm Per}_s(E,\Omega):=
{\mathcal{I}}_s(E\cap\Omega,\,E^c\cap\Omega)+
{\mathcal{I}}_s(E\cap\Omega,\,E^c\cap\Omega^c)+
{\mathcal{I}}_s(E\cap\Omega^c,\,E^c\cap\Omega).$$
As customary, the superscript ``$c$'' denotes the complementary set.
Then, one says that~$E$ is an~{\em $s$-minimal set in~$\Omega$}
(or that~$\partial E$ is an~{\em $s$-minimal surface in~$\Omega$}) if
it is a local minimizer in~$\Omega$ of the $s$-perimeter functional,
i.e. if~${\rm Per}_s(E,\Omega)<+\infty$ and
$$ {\rm Per}_s(E,\Omega)\le {\rm Per}_s(F,\Omega)$$
for every~$F\subseteq\R^N$ such that~$F\cap\Omega^c=E\cap\Omega^c$.\medskip

We also say that~$E$ is an~{\em $s$-minimal set if it is
an~$s$-minimal set} in~$B_R$ for all~$R>0$.\medskip

The study of $s$-minimal surfaces is extremely
challenging and fascinating,
and their complete regularity theory is one of the most important open problems
in the topic of fractional analysis. Till now, it is known that $s$-minimal surfaces
are $C^\infty$ in the interior of the reference domain
when the dimension~$N$ of the ambient space is less than or equal than~$3$
(see~\cite{SV}) and when the dimension~$N$ of the ambient space is less than or equal than~$8$
as long as the fractional exponent~$s$ is sufficiently close to~$1$ (see~\cite{CV}
and also~\cite{barrios}).\medskip

The boundary regularity is somehow a different story with respect to the interior case,
since $s$-minimal surfaces have the tendency to stick at the boundary of the domain and
to detach in a $C^{1,\frac{1+s}2}$ fashion\footnote{As a ``philosophical remark'',
let us mention that the exponent~$\frac{1+s}{2}$
is somehow consistent with the kernel in computations like that in~\eqref{E:1}
in this paper, in which the kernel is of the form~$n+2S$, with~$S:=\frac{1+s}{2}$.
These types of operators are sort of nonlinear $S$-Beltrami Laplacian
along manifolds of dimension~$n$ embedded into~$\R^N=\R^{n+1}$.} from it, see~\cite{dan, DSV17}.\medskip

Nonlocal perimeters and nonlocal minimal surfaces have also a number
of applications, and they naturally
arise for instance as interfaces of long-range phase coexistence models
(see~\cite{SV2}) and models for cellular automata (see~\cite{takis}).
An intense research activity has been done concerning nonlocal
isoperimetric problems (see e.g.~\cites{GR, FSISO, FMM, I5})
constant nonlocal mean curvature surfaces
(see e.g.~\cite{CFW, ciraolo} and also~\cite{NA}) and nonlocal
geometric flows (see~\cite{I, CMP15, saez, sinestrari}), and the topic is rich
of very challenging and important open questions, with many links to other subjects,
see e.g.~\cites{CIME, survey} for recent surveys.\medskip

We say that a set~$E\subseteq\R^N$ is a {\em cone} (with respect to the origin)
if for every~$p\in E$ we have that~$tp\in E$
for any~$t\ge0$.\medskip

We say that a set~$E\subseteq\R^N$ is an {\em $s$-minimal cone} 
if it is an $s$-minimal set and it is a cone.\medskip

It is also interesting to consider the case in which sets have the structure of subgraphs
(say, for definiteness, with respect to the last coordinate).
That is, we say that~$E\subseteq\R^N=\R^{n+1}$ is a {\em subgraph} if there exists
a (measurable) function~$u:\R^n\to\R$ such that
\begin{equation}\label{GRAPH} E=\{ x_{n+1}<u(x_1,\dots,x_n)\}.\end{equation}
We say that~$E$ is an {\em $s$-minimal subgraph} if it is an~$s$-minimal set and a subgraph.\medskip

We remark that the notion of subgraph fits well into the nonlocal minimal surface setting,
since if one considers cylindrical domains~$\Omega=\Omega_o\times\R\subset\R^{n+1}$
with~$\Omega_o\subset\R^n$ and prescribes the data of the set outside~$\Omega$
to be a subgraph, then the $s$-minimal sets in~$\Omega$ possess a subgraph structure,
though with possible discontinuities along the boundary of the cylinder
(see~\cite{DSV16}, and this boundary discontinuity is an important difference
with respect to the classical minimal surfaces).\medskip

In this context, the main result of this paper is the following:

\begin{theorem}\label{ADG}
Let~$ n\ge 1$ and~$ E$ be an $s$-minimal cone in~$\R^{n+1}$.
Assume that~$E$ is a subgraph, as in~\eqref{GRAPH}.

Suppose that
\begin{equation}\label{RR}
{\mbox{$u$ is locally Lipschitz continuous in~$\R^n\setminus\{0\}$.}}\end{equation} Then $E$ is a halfspace.
\end{theorem}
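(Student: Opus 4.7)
The first observation is that, since $E$ is a cone with vertex at the origin and a subgraph in the sense of~\eqref{GRAPH}, the defining function $u$ is positively $1$-homogeneous: $u(\lambda x)=\lambda u(x)$ for every $\lambda>0$ and $x\in\R^n$, and in particular $u(0)=0$. Combined with assumption~\eqref{RR}, the $0$-homogeneity of $\nabla u$ (wherever it is defined) upgrades $u$ to a globally Lipschitz function on~$\R^n$, whose Lipschitz constant is controlled by the Lipschitz norm of $u$ on $S^{n-1}$. I would then invoke the ``Lipschitz $\Rightarrow$ $C^\infty$'' regularity result of~\cite{FV} at each point of $\R^n\setminus\{0\}$: this upgrades $u$ to $C^\infty(\R^n\setminus\{0\})$ and guarantees that the pointwise fractional mean curvature of $\partial E$ vanishes identically above $\R^n\setminus\{0\}$.

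Next I would differentiate this vanishing equation in an arbitrary direction $e\in S^{n-1}$. Setting $v_e:=\partial_e u$, one obtains a linear nonlocal equation of the form $\mathcal{L}_u v_e=0$ on $\R^n\setminus\{0\}$, where $\mathcal{L}_u$ is the linearized nonlocal mean curvature operator advertised in the abstract. By the $1$-homogeneity of $u$, the function $v_e$ is automatically $0$-homogeneous, hence globally bounded on $\R^n\setminus\{0\}$ with $\|v_e\|_{L^\infty(\R^n)}=\|v_e\|_{L^\infty(S^{n-1})}$.

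The conclusion should then follow from a Liouville-type statement: a bounded, $0$-homogeneous classical solution of $\mathcal{L}_u v=0$ on $\R^n\setminus\{0\}$ must be constant. Using the maximum principle for $\mathcal{L}_u$ announced in the abstract, one applies it on enlarging annuli $B_R\setminus B_\varrho$ and exploits $0$-homogeneity to wipe out boundary contributions as $R\to\infty$ and $\varrho\to 0$; the isolated singularity at the origin is removable because $v_e$ is bounded and, from any fixed $x\ne 0$, the nonlocal kernel of $\mathcal{L}_u$ is integrable against bounded data even through the singular point. Once every directional derivative of $u$ is constant, $u$ is affine; combined with $u(0)=0$ this forces $u(x)=a\cdot x$ for some $a\in\R^n$, so that $E=\{x_{n+1}<a\cdot x\}$ is a halfspace.

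The principal obstacle is the combined content of the third paragraph. First, one must carry out the explicit linearization of the fractional mean curvature operator and verify that the resulting integro-differential operator admits a maximum principle strong enough to imply the Liouville step; this is the nonlocal counterpart of De Giorgi's derivation in~\cite{DG1}, and the footnote in the introduction already hints that $\mathcal{L}_u$ has the structure of a Beltrami-type operator of order $n+(1+s)$ along the graph, with a sign-definite kernel. Second, one must reconcile the presence of the cone singularity at the origin, where the linearized equation does not hold, with the global Liouville conclusion; the $0$-homogeneity of $v_e$ is precisely the redundancy needed to make this removable-singularity reasoning succeed.
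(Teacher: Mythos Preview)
Your overall strategy matches the paper's: write the vanishing fractional mean curvature equation for $u$, differentiate it to obtain a linearized equation $\mathcal{L}_u v=0$ for each directional derivative $v=\partial_e u$, and exploit the $0$-homogeneity of $v$ together with the positivity of the kernel of $\mathcal{L}_u$ to conclude that $v$ is constant. There are, however, two places where you diverge from the paper, and one of them matters.

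\textbf{Regularity.} You upgrade $u$ from Lipschitz to $C^\infty$ on $\R^n\setminus\{0\}$ by invoking Theorem~1.1 of~\cite{FV}. This is logically admissible as a proof of Theorem~\ref{ADG} in isolation, but it defeats the purpose of the paper: Theorem~\ref{ADG} is intended to \emph{yield} a new and independent proof of Theorem~\ref{FV1.1}, so importing~\cite{FV} here makes that derivation circular. The paper instead first proves Theorem~\ref{ADG} under the extra hypothesis $u\in C^{3}(\R^n\setminus\{0\})$, and then removes this hypothesis by an iterated dimension reduction (Lemma~\ref{0p045:A}): a blow-up at any non-origin point where the cone fails to be $C^3$ produces a lower-dimensional $s$-minimal cone that is again a Lipschitz subgraph and again singular, so one descends in dimension until the $C^3$ case applies and reaches a contradiction. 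This keeps the argument self-contained and independent of~\cite{FV}.

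\textbf{Maximum principle.} Your description in terms of enlarging annuli $B_R\setminus B_\varrho$ and ``wiping out boundary contributions'' is borrowed from the local elliptic theory and does not transfer cleanly to the nonlocal setting: for an integro-differential operator the comparison data sit in the entire complement of the domain, so varying the annulus gives no leverage when $v$ is already globally bounded. The paper's argument is the correct nonlocal mechanism and is simpler than what you sketch: by $0$-homogeneity the global maximum of $v$ is attained at some $\bar x\in S^{n-1}$; evaluating $\mathcal{L}_u v(\bar x)=0$ and using that $F'>0$ and $v(\bar x\pm\vartheta)-v(\bar x)\le 0$ for every $\vartheta$, one sees that the integrand is pointwise nonpositive, hence identically zero, hence $v$ is constant. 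No annulus exhaustion or removable-singularity argument is required, because the equation is evaluated only at $\bar x\ne 0$ and the integral in $\vartheta$ is shown (in the appendices) to be absolutely convergent.
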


This result is the nonlocal analogue of that on page~79 of~\cite{DG1}
for the minimizers of the classical perimeter functional.

Theorem~\ref{ADG} can be also recasted for
conical subgraphs with zero fractional
mean curvature. For this, given~$E\subset\R^N$
and~$x\in\partial E$, with~$\partial E$ of class~$C^{1,\alpha}$ with~$\alpha>s$ at~$x$,
we define the $s$-mean curvature of~$E$ at~$x$ as
$$ {\mathcal{H}}^E_s(x):=\int_{\R^n}\frac{\chi_{E^c}(y)-\chi_E(y)}{|x-y|^{n+s}},
$$
in the principal value sense.
See~\cites{CRS, AB, LOMB, I5} for the basic properties of~${\mathcal{H}}^E_s$
and for the fact that it can be considered as the first variation of~${\rm Per}_s$.
In this setting, one can restate Theorem~\ref{ADG}
by saying that {\em if a set
is a subgraph and a cone, that is smooth outside the origin,
and with~${\mathcal{H}}^E_s=0$ on~$\partial E\setminus\{0\}$,
then it must be a halfspace.
}
The structural assumptions of such statement
are somehow sharp, since the cones discussed in Theorem~3 of~\cite{DDW}
are smooth outside the origin and have
zero fractional mean curvature in any dimension, without being halfspaces (but
such cones are not subgraphs).
\medskip

As an immediate consequence of Theorem~\ref{ADG}, we obtain 
a new proof of the following two results, which have been recently proved, with
other methods, in~\cite{FV}:

\begin{theorem}[Theorem 1.1 in~\cite{FV}]\label{FV1.1}
Let $n\ge 1$ and let~$ E$ be an $s$-minimal set in~$ B _1\subset\R^{ n+1 }$. Suppose
that~$\partial E\cap B_1$ is locally Lipschitz. Then~$\partial E\cap B_1$ is~$ C^\infty$.
\end{theorem}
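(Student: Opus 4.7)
The plan is to deduce the theorem from Theorem~\ref{ADG} by a classical blow-up argument at an arbitrary boundary point. Fix $x_0\in\partial E\cap B_1$. Since $\partial E$ is locally Lipschitz near $x_0$, after a rotation there exist $\rho>0$ and a Lipschitz function $u\colon B'_\rho(x_0')\to\R$ with $u(x_0')=(x_0)_{n+1}$ and Lipschitz constant $L$ such that
\[
E\cap B_\rho(x_0)=\{x\in B_\rho(x_0):\,x_{n+1}<u(x_1,\dots,x_n)\}.
\]
For $r>0$, I would consider the rescaled set $E_r:=(E-x_0)/r$, which by the $s$-perimeter scaling is $s$-minimal in $B_{1/r}$, and which, on $B_{\rho/r}$, is the subgraph of
$u_r(y'):=\bigl(u(x_0'+ry')-(x_0)_{n+1}\bigr)/r$, a function with $u_r(0)=0$ and Lipschitz constant equal to $L$.

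Next, I would invoke the standard compactness and monotonicity properties for $s$-minimal surfaces from~\cite{CRS}: along a sequence $r_k\downarrow 0$, the sets $E_{r_k}$ converge in $L^1_{\rm loc}(\R^{n+1})$ to an $s$-minimal \emph{cone} $E_\infty$. Simultaneously, by Ascoli--Arzel\`a, $u_{r_k}$ converges locally uniformly on $\R^n$ to a Lipschitz function $u_\infty\colon\R^n\to\R$ with Lipschitz constant at most $L$, and $E_\infty=\{x_{n+1}<u_\infty(x')\}$. Hence $E_\infty$ is an $s$-minimal cone which is a subgraph as in~\eqref{GRAPH} with $u_\infty$ globally Lipschitz, and in particular satisfying~\eqref{RR}. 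Theorem~\ref{ADG} then forces $E_\infty$ to be a halfspace.

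Since every blow-up of $E$ at every $x_0\in\partial E\cap B_1$ is a halfspace, a compactness argument shows that for every such $x_0$ there is a scale $r>0$ at which $E$ is sufficiently flat in $B_r(x_0)$ to apply the $\varepsilon$-regularity (improvement-of-flatness) theorem for nonlocal minimal surfaces of~\cite{CRS}; this yields that $\partial E$ is of class $C^{1,\alpha}$ near $x_0$ for some $\alpha>s$. Once $C^{1,\alpha}$ regularity with $\alpha>s$ is in hand, the nonlocal mean curvature equation ${\mathcal{H}}^E_s=0$ can be differentiated and bootstrapped along the lines of~\cite{barrios} (see also~\cite{CV}), giving that $\partial E\cap B_1$ is $C^\infty$.

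The main obstacle is the passage to the blow-up: one has to check that the uniform Lipschitz control on $u$ survives under rescaling and under the $L^1_{\rm loc}$ limit, so that the limit cone inherits the subgraph structure required by Theorem~\ref{ADG}; the rest is a combination of Theorem~\ref{ADG} with well-established $\varepsilon$-regularity and bootstrap tools.
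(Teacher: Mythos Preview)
Your proposal is correct and follows essentially the same approach as the paper: blow up at an arbitrary boundary point, show the blow-up cone inherits a Lipschitz subgraph structure, apply Theorem~\ref{ADG} to conclude the cone is a halfspace, and then invoke the $\varepsilon$-regularity and bootstrap results of~\cite{CRS} and~\cite{barrios} (which the paper packages as~\eqref{up}). The paper's own proof is just a terse pointer to the blow-up construction carried out in~\cite{FV} together with~\eqref{up}, while you have written out those details explicitly; the only cosmetic difference is that you call the blow-up limit~$E_\infty$ rather than~$E_0$.
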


\begin{theorem}[Theorem 1.2 in~\cite{FV}]\label{FV1.2}
Let~$
E=\{ x_{n+1}<u(x_1,\dots,x_n)\}$
be an $s$-minimal subgraph, and
assume that there are no singular $s$-minimal cones in dimension~$ n$ 
(that is, if~${\mathcal{ C}}\subset\R^n$ is a non-empty
$s$-minimal cone, then~${\mathcal{ C}}$
is a halfspace). Then~$ u$ is an affine function (thus $E$ is a halfspace).
\end{theorem}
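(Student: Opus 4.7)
The plan is to deduce Theorem~\ref{FV1.2} from Theorem~\ref{ADG} through a blow-down analysis at infinity combined with a dimensional reduction argument.

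First, I would rescale: setting $E_k:=R_k^{-1}E$ for a sequence $R_k\to+\infty$ and using the standard compactness and minimality-preserving properties of $s$-minimal sets (see~\cite{CRS}), one can extract a subsequence along which $E_k\to\mathcal{C}$ in $L^1_{\mathrm{loc}}(\R^{n+1})$, where $\mathcal{C}\subset\R^{n+1}$ is an $s$-minimal cone. Since each $E_k$ is a subgraph in the direction $e_{n+1}$, so is $\mathcal{C}$, with a $1$-homogeneous defining function $u_{\mathcal{C}}:\R^n\to\R$.

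Second, I would show that $\partial\mathcal{C}$ is smooth on $\R^{n+1}\setminus\{0\}$ by a standard dimensional reduction. If some $p\in\partial\mathcal{C}\setminus\{0\}$ were singular, the cone structure of $\mathcal{C}$ would make a further blow-up of $\mathcal{C}$ at $p$ translation-invariant in the direction $p/|p|$, hence of the form $\mathcal{C}''\times\R$ with $\mathcal{C}''\subset\R^n$ a singular $s$-minimal cone (one checks that integrating the kernel $|\cdot|^{-(n+1+s)}$ along the extra variable recovers a constant multiple of the $\R^n$-kernel $|\cdot|^{-(n+s)}$, so that $s$-minimality passes between the product and its cross-section). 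This contradicts the hypothesis of the theorem, hence $u_{\mathcal{C}}$ is smooth, and in particular locally Lipschitz, on $\R^n\setminus\{0\}$. Theorem~\ref{ADG} then forces $\mathcal{C}$ to be a halfspace of the form $\{x_{n+1}<\ell(x)\}$ for some affine $\ell$.

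The main obstacle is the final step of promoting the information that \emph{some} blow-down of $E$ is a halfspace to the conclusion that $E$ \emph{itself} is a halfspace. Here I would rely on the improvement-of-flatness regularity theory for $s$-minimal surfaces of~\cite{CRS}: since the blow-down is flat, $\partial E\cap B_R$ is uniformly $C^{1,\alpha}$-close to an affine graph at all sufficiently large scales, and in particular $u$ is Lipschitz outside a sufficiently large ball. At that point one may invoke the nonlocal Bernstein--Moser-type classification of Lipschitz $s$-minimal subgraphs outside a ball, which is the new result announced in the abstract of the paper, to conclude that $u$ is affine on all of $\R^n$, and therefore $E$ is a halfspace.
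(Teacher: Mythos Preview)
Your overall architecture---blow down, verify that the resulting $s$-minimal cone is a subgraph with locally Lipschitz defining function outside the origin via dimensional reduction (this is exactly Corollary~\ref{doppio cor}), and then apply Theorem~\ref{ADG}---is the same as the paper's. The difference lies entirely in the last step.

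The paper closes the argument in one line by invoking~\eqref{down} (Lemma~3.1 in~\cite{FV}): once \emph{some} blow-down cone $E_\infty$ is a halfspace, the monotonicity formula forces $E=E_\infty$. You instead propose a detour: use improvement of flatness to extract a global Lipschitz bound on $u$, and then call Theorem~\ref{MOSBE}. This is not circular (Theorem~\ref{MOSBE} depends only on Theorem~\ref{ADG} and~\eqref{down}, not on Theorem~\ref{FV1.2}), but it is redundant: the proof of Theorem~\ref{MOSBE} itself consists of taking the blow-down, applying Theorem~\ref{ADG}, and then invoking~\eqref{down}. So your route performs the blow-down/ADG step twice, with an improvement-of-flatness bridge in between, when~\eqref{down} already delivers the conclusion after the first pass.

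There is also an imprecision in that bridge. You only know $E/R_k\to H$ along a \emph{subsequence} of scales, so ``uniformly $C^{1,\alpha}$-close to an affine graph at all sufficiently large scales'' is not immediate. What improvement of flatness actually gives is that $\partial E$ is a $C^{1,\alpha}$ graph with uniform gradient bound in $B_{R_k/2}$ for each large $k$; since these balls exhaust $\R^{n+1}$, you obtain that $u$ is \emph{globally} Lipschitz (not merely Lipschitz outside a ball, which is the weaker consequence you state). The argument can be repaired along these lines, but the direct appeal to~\eqref{down} avoids all of this.
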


The proofs of Theorems~\ref{FV1.1} and~\ref{FV1.2}
are based on the classification of blow-up and blow-down cones respectively (see
e.g. Section~3 in~\cite{FV} or Section~\ref{CONISUR1}
here for a detailed discussion on blow-up and blow-down cones).
In~\cite{FV} such classification of limit cones
is obtained by geometric methods: roughly speaking,
the idea in~\cite{FV} is to use the cone itself as a barrier, modifying it in a given region
by adding a ``small bump'' to the original cone 
and sliding this modified surface to a first contact point with the original one
(on the one hand, the effect of this bump should change
only by little the nonlocal mean curvature at a smooth point, on the other hand
such nonlocal mean curvature is influenced from far-away points by an order one contribution
in a non-flat picture, and these two observations provide
a contradiction).\medskip

Very roughly speaking,
the idea for the argument of~\cite{FV} is quite geometric
and sketched in Figure~\ref{WIK2} (after a dilation, for
instance, the subgraph reduces to a cone,
which is touched from below by the dashed surface,
which should have a small nonlocal mean curvature
if the additional bump is small, in contradiction
with the mass produced in the grey region).\medskip

\begin{figure}
    \centering
    \includegraphics[width=11cm]{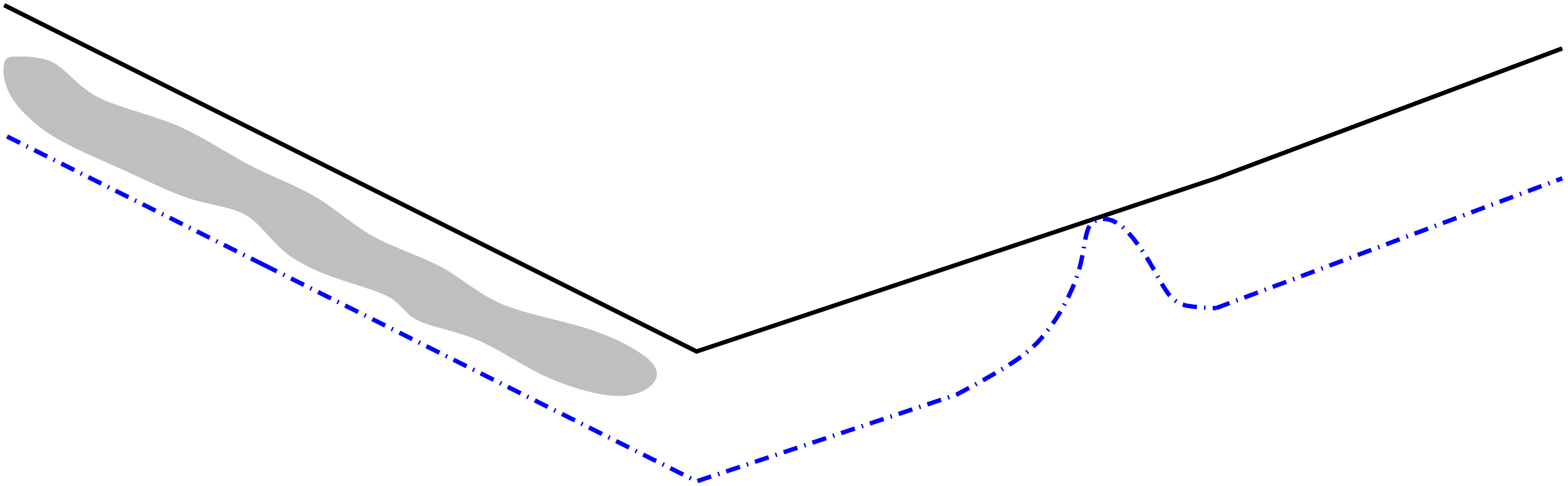}
    \caption{\it {{A geometric idea in \cite{FV}.}}}
    \label{WIK2}
\end{figure}

In this paper instead
we obtain Theorems~\ref{FV1.1} and~\ref{FV1.2} directly from
Theorem~\ref{ADG}, which classifies the blow-up and blow-down cones
without any other additional geometric argument
and only relying on analytical methods, such as linearization
and maximum principle.\medskip

It is also interesting to point out that our results can also comprise
the more general settings of subgraphs outside a ball. Namely, we say that~$E
\subseteq\R^N=\R^{n+1}$ is a {\em subgraph outside a ball} if there exists
a ball~$B\subset\R^n$ and
a function~$u:\R^n\to\R$ such that
\begin{equation}\label{GRAPH:BB} 
E \setminus (B\times\R)=\{ x_{n+1}<u(x_1,\dots,x_n)\}\setminus (B\times\R)
.\end{equation}
We say that~$E$ is an {\em $s$-minimal subgraph
outside a ball} if it is an~$s$-minimal set and a subgraph outside a ball.\medskip

The setting that we provide in Theorem~\ref{ADG} is
strong enough to establish a classification result
for Lipschitz
$s$-minimal subgraphs outside a ball, which goes
as follows:

\begin{theorem}\label{MOSBE}
Let~$ n\ge 1$ and~$ E$ be an $s$-minimal subgraph outside an open ball~$B$, as in~\eqref{GRAPH:BB}.
Suppose also that
\begin{equation}\label{REGMOS}
{\mbox{$u$ is
globally Lipschitz continuous in~$\R^n\setminus B$.}}\end{equation}
Then $E$ is a halfspace.
\end{theorem}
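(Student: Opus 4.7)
The plan is to apply Theorem~\ref{ADG} to a blow-down cone of~$E$ at infinity, and then propagate the resulting flatness back to~$E$ itself. For each $R>0$, set $E_R:=R^{-1}E$ and $u_R(x):=R^{-1}u(Rx)$. Scale invariance of the $s$-perimeter makes each $E_R$ an $s$-minimal set in $\R^{n+1}$, and outside the cylinder $(R^{-1}B)\times\R$ the set $E_R$ coincides with the subgraph of $u_R$, whose Lipschitz constant on $\R^n\setminus R^{-1}B$ equals the one of $u$ on $\R^n\setminus B$ granted by~\eqref{REGMOS}. As $R\to\infty$ the obstruction $R^{-1}B$ shrinks to $\{0\}$, producing a uniformly Lipschitz family of subgraphs outside vanishing balls.

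Extracting a sequence $R_k\to\infty$, I would use the $L^1_{\rm loc}$-compactness of $s$-minimal sets together with Arzel\`a--Ascoli for $u_{R_k}$ (which are uniformly Lipschitz and uniformly bounded on compacta of $\R^n\setminus\{0\}$, thanks to the linear growth of $u$ at infinity forced by~\eqref{REGMOS}) to obtain $E_{R_k}\to E_\infty$ in $L^1_{\rm loc}(\R^{n+1})$ and $u_{R_k}\to u_\infty$ locally uniformly on $\R^n\setminus\{0\}$. The limit $E_\infty$ is $s$-minimal; outside $\{0\}\times\R$ it is the subgraph of the globally Lipschitz function $u_\infty$, and since $\{0\}\times\R$ is negligible, $E_\infty$ is itself a subgraph in the sense of~\eqref{GRAPH}. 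The Caffarelli--Roquejoffre--Savin monotonicity formula for the $s$-perimeter then forces $E_\infty$ to be a cone with respect to the origin, so all hypotheses of Theorem~\ref{ADG} are satisfied. Hence $E_\infty$ is a halfspace, and being both a cone and a subgraph, $E_\infty=\{x_{n+1}<L\cdot x\}$ for some $L\in\R^n$, i.e.\ $u_\infty(x)=L\cdot x$.

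The remaining step is to promote this information back to $E$, following the same scheme by which Theorem~\ref{ADG} yields Theorem~\ref{FV1.2} in this paper: since every blow-down of $E$ is a halfspace and halfspaces are isolated among $s$-minimal cones, the tangent cone at infinity of $E$ is unique, and the Caffarelli--Valdinoci improvement-of-flatness then gives that $\partial E$ is a smooth graph outside a large ball whose gradient converges to $L$ at infinity. Combined with the zero-fractional-mean-curvature equation satisfied by $u$ on $\R^n\setminus B$, the global Lipschitz bound~\eqref{REGMOS}, and a Liouville-type argument (plus the strong maximum principle for $s$-minimal sets, to absorb the possible irregularity of $\partial E$ inside $B\times\R$), this forces $u$ to be globally affine and $E$ to be a halfspace.

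The main obstacle is precisely this last transfer: the blow-down setup, the compactness, and the application of Theorem~\ref{ADG} are essentially routine given the material already in the paper, but closing the loop from ``every blow-down is a halfspace'' to ``$E$ itself is a halfspace'' requires the uniqueness of the tangent cone at infinity together with the full improvement-of-flatness and rigidity machinery for $s$-minimal subgraphs, delicately combined with the strong maximum principle inside the cylinder $B\times\R$.
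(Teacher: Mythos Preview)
Your blow-down setup and application of Theorem~\ref{ADG} match the paper's proof essentially line by line (modulo the minor point that the paper first translates so that~$0\in\partial E$, which is needed for the monotonicity formula to produce a cone centered at the origin).

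Where you diverge is in the final ``transfer'' step, and here you are manufacturing an obstacle that does not exist. The paper does not invoke uniqueness of the tangent cone at infinity, improvement-of-flatness, a Liouville argument, or any maximum principle inside~$B\times\R$. It simply cites~\eqref{down} (Lemma~3.1 in~\cite{FV}): if an $s$-minimal set has a halfspace as a blow-down, then the set \emph{equals} that halfspace. The mechanism behind this is the monotonicity formula itself, used a second time: the normalized $s$-perimeter density~$\Phi_E(r)$ is nondecreasing in~$r$, its limit as~$r\to\infty$ is the density of the halfspace~$E_\infty$, and halfspaces realize the minimal possible density among $s$-minimal sets. Hence~$\Phi_E$ is constant, which forces~$E$ to be a cone, and therefore~$E=E_\infty$. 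This one-line rigidity closes the argument immediately; your proposed route through regularity and Liouville-type statements would (at best) reprove this fact in a much more laborious way, and the concern you flag about ``absorbing the possible irregularity of~$\partial E$ inside~$B\times\R$'' never arises.
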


Theorem~\ref{MOSBE} can be seen as the nonlocal counterpart
of the results for classical minimal surfaces that were obtained
in Section~7 of~\cite{MOS} by means of Harnack-type inequalities.
As far as we know, Theorem~\ref{MOSBE} is new even for $s$-minimal subgraphs as in~\eqref{GRAPH}.
\medskip

We also point out that a strengthening of Theorem~\ref{FV1.1}
holds true, where one can drop the Lipschitz assumption
if no singular cones exist in one dimension less, according to the following statement:

\begin{theorem}\label{noC}
Let $n\ge 1$ and let~$ E$ be an $s$-minimal subgraph in~$ B _1\subset\R^{ n+1 }$. 
Assume also that there are no singular $s$-minimal cones in dimension~$ n$.
Then~$\partial E\cap B_1$ is~$ C^\infty$.
\end{theorem}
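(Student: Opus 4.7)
The plan is to reduce Theorem~\ref{noC} to Theorem~\ref{FV1.2} by a blow-up analysis at boundary points, and then close with the nonlocal improvement-of-flatness theory. The hypothesis on the absence of singular $s$-minimal cones in dimension $n$ enters precisely through Theorem~\ref{FV1.2}, applied to the blow-up cone rather than to $E$ itself.

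Fix $x_0 \in \partial E \cap B_1$ and consider the rescaled sets $E_\lambda := (E - x_0)/\lambda$ for $\lambda>0$. By the standard compactness, density, and monotonicity results of \cite{CRS} for $s$-minimal sets, there exists a subsequence $\lambda_k \to 0^+$ along which $E_{\lambda_k}$ converges in $L^1_{\rm loc}(\R^{n+1})$ to an $s$-minimal cone $C \subset \R^{n+1}$. Writing $x_0 = (x_0', x_0^{n+1})$ with $x_0' \in \R^n$, a direct computation shows that each $E_\lambda$ is the subgraph of the rescaled function $v_\lambda(y') := \bigl(u(x_0' + \lambda y') - x_0^{n+1}\bigr)/\lambda$. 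A slicing argument along vertical lines, together with the fact that an $L^1_{\rm loc}(\R)$-limit of left half-lines is again a left half-line, transfers the subgraph structure to the limit, so $C$ is a global $s$-minimal subgraph of $\R^{n+1}$.

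At this point, Theorem~\ref{FV1.2} applies to $C$: since by hypothesis there are no singular $s$-minimal cones in dimension $n$, the defining function of $C$ is affine, hence $C$ is a halfspace. Because this holds for every boundary point $x_0 \in \partial E \cap B_1$ and every blow-up sequence, all blow-up cones at interior points of $\partial E \cap B_1$ are halfspaces. The improvement-of-flatness theorem for $s$-minimal surfaces (see \cite{CRS, SV}) then yields that $\partial E$ is $C^{1,\alpha}$ in a neighborhood of each $x_0$, and the standard bootstrap via Schauder-type estimates for the nonlocal mean curvature equation (see \cite{barrios}) upgrades this to $C^\infty$.

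The only technical step requiring care is the preservation of the subgraph structure under $L^1_{\rm loc}$ convergence, and the fact that the blow-up limit is $s$-minimal on all of $\R^{n+1}$ (so that Theorem~\ref{FV1.2} can be invoked without extra qualification); both are routine once set up correctly. The rest of the argument is dimensional reduction combined with results already at our disposal.
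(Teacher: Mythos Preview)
Your argument has a genuine gap at the step where you claim the blow-up cone $C$ is an $s$-minimal \emph{subgraph}. Your slicing observation---that an $L^1_{\rm loc}(\R)$-limit of left half-lines is again a left half-line---only yields that $C$ is a \emph{quasi}-subgraph, i.e.\ $C=\{x_{n+1}<u_0(x')\}$ for some $u_0:\R^n\to\R\cup\{-\infty,+\infty\}$ (this is exactly the content of Lemma~16.3 in~\cite{GIU}). Without an a priori gradient bound on~$u$ near~$x_0$, nothing prevents $u_0$ from taking infinite values on a set of positive measure; in that case $C$ is not a subgraph in the sense of~\eqref{GRAPH}, and Theorem~\ref{FV1.2} does not apply as stated.

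The paper's proof confronts this issue head-on by splitting into two cases. If the blow-up $E_0$ is a genuine subgraph, then (using Corollary~\ref{doppio cor} and the no-singular-cones hypothesis to get local Lipschitz regularity of $u_0$ away from the origin) one applies Theorem~\ref{ADG} directly---note this is slightly more efficient than invoking Theorem~\ref{FV1.2}, since $E_0$ is already a cone. If instead $E_0$ is only a quasi-subgraph, then some vertical translate $E_0+\tau e_{n+1}$ touches $E_0$ at a point away from the origin, and the strong maximum principle forces $E_0=\tilde E\times\R$; the hypothesis on cones in dimension~$n$ then makes $\tilde E$ a halfspace, so $E_0$ is a (vertical) halfspace. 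Your write-up is missing this second branch entirely, and the phrase ``both are routine once set up correctly'' papers over precisely the place where additional input (the sliding/maximum-principle argument) is needed.
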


{F}rom Theorem~\ref{noC} here and Theorem~1
in~\cite{SV}, it plainly follows that:

\begin{corollary}\label{C6}
Let $n\in\{ 1,\,2\}$ and let~$ E$ be an $s$-minimal subgraph in~$ B _1\subset\R^{ n+1 }$. 
Then~$\partial E\cap B_1$ is~$ C^\infty$.
\end{corollary}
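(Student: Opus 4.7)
The plan is to read off Corollary~\ref{C6} directly from Theorem~\ref{noC}, whose only nontrivial hypothesis is the absence of singular $s$-minimal cones in dimension~$n$. Thus my entire task reduces to verifying this hypothesis for~$n\in\{1,2\}$.

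First I would dispatch the case $n=1$ by inspection: since a cone~$\mathcal{C}\subset\R$ with respect to the origin satisfies $tp\in\mathcal{C}$ for all $p\in\mathcal{C}$ and $t\geq 0$, a one-dimensional cone is necessarily one of $\emptyset$, $\{0\}$, $\R$, $[0,\infty)$ or $(-\infty,0]$. The only nontrivial cases are the two closed halflines, which are halfspaces in~$\R$. Hence no singular $s$-minimal cone exists in dimension~$1$, and Theorem~\ref{noC} applies to give the claim for~$n=1$.

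For~$n=2$, I would invoke Theorem~1 of~\cite{SV}, which yields $C^\infty$ interior regularity of $s$-minimal surfaces in ambient dimension at most~$3$. Applied in any ball centered at the vertex of an $s$-minimal cone~$\mathcal{C}\subset\R^2$, this forces $\partial\mathcal{C}$ to be smooth in a neighborhood of the origin. The dilation invariance of~$\mathcal{C}$ then promotes local smoothness to global flatness: blowing up at the origin returns~$\mathcal{C}$ itself and, simultaneously, the tangent halfspace to $\partial\mathcal{C}$ at the origin, so~$\partial\mathcal{C}$ coincides with this tangent line and~$\mathcal{C}$ is a halfplane. This verifies the no-singular-cones hypothesis in dimension~$2$, whence a second application of Theorem~\ref{noC} concludes the argument.

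There is really no genuine obstacle: the entire content of the corollary is carried by the two cited results, and the deduction is essentially immediate. The only substantive observation worth making explicit, as above, is the cone-scaling argument that upgrades interior smoothness at the vertex into flatness of the cone, which is standard.
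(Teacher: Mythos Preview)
Your proposal is correct and follows the same route as the paper: the corollary is deduced immediately from Theorem~\ref{noC} together with Theorem~1 in~\cite{SV}, the latter supplying the ``no singular $s$-minimal cones in dimension~$n$'' hypothesis for $n\in\{1,2\}$. Your write-up simply unpacks more detail than the paper's one-line derivation---in particular, you treat $n=1$ by hand and, for $n=2$, you pass through interior regularity plus a cone-rescaling step, whereas Theorem~1 of~\cite{SV} already asserts directly that $s$-minimal cones in~$\R^2$ are halfplanes, making that extra step unnecessary (though not incorrect).
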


Similarly, from Theorem~\ref{noC} here and Theorem~2 in~\cite{CV},
we obtain:

\begin{corollary}\label{C7}
Let $n\in\{ 1,\dots, 7\}$.
Then there exists~$\epsilon_n\in(0,1)$ such that if~$s\in (1-\epsilon_n,\,1)$ 
and~$ E$ is an $s$-minimal subgraph in~$ B _1\subset\R^{ n+1 }$, it holds that~$\partial E\cap B_1$ is~$ C^\infty$.
\end{corollary}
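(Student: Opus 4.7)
The plan is to derive Corollary~\ref{C7} as a direct combination of Theorem~\ref{noC} with the nonexistence of singular $s$-minimal cones in low dimensions when $s$ is sufficiently close to $1$, as established in Theorem~2 of~\cite{CV}. There is essentially no new argument to produce beyond recognizing that the two inputs fit together.

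First I would fix $n\in\{1,\dots,7\}$ and invoke Theorem~2 of~\cite{CV}: this provides a threshold $\epsilon_n\in(0,1)$ such that, for every $s\in(1-\epsilon_n,1)$, every nonempty $s$-minimal cone in $\R^n$ is a halfspace. In particular, for such $s$ the hypothesis of Theorem~\ref{noC} is satisfied, namely there are no singular $s$-minimal cones in dimension $n$.

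Next, given any $s$-minimal subgraph $E$ in $B_1\subset\R^{n+1}$ with $s\in(1-\epsilon_n,1)$, I would apply Theorem~\ref{noC} with this value of $n$ and $s$. Its conclusion is exactly that $\partial E\cap B_1$ is $C^\infty$, which is the statement of the corollary. The only subtle point worth flagging is that the $\epsilon_n$ produced by~\cite{CV} is dimension-dependent but independent of $E$, so the choice works uniformly over all $s$-minimal subgraphs as required by the statement.

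Since the content of the corollary is fully packaged inside Theorem~\ref{noC} and the cited result, there is no real obstacle at this stage; the depth is in Theorem~\ref{noC} itself (which in turn is to be proved from Theorem~\ref{ADG} via blow-up/blow-down cone analysis) and in the fine dimension-versus-$s$ analysis behind~\cite{CV}.
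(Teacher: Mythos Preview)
Your proposal is correct and matches the paper's own argument exactly: the corollary is stated as a direct consequence of Theorem~\ref{noC} combined with Theorem~2 in~\cite{CV}, and you have supplied precisely that combination. There is nothing to add or adjust.
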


Corollaries~\ref{C6} and~\ref{C7} can be seen as a positive answer when~$n\in\{ 1,\,2\}$
(or when~$n\in\{ 1,\dots, 7\}$ and~$s$ is close enough to~$1$) to the Open Problem 
in Section~7.3 of~\cite{CIME}.\medskip

The rest of this paper is mostly devoted to the proof of Theorem~\ref{ADG},
which in turn will imply Theorems~\ref{FV1.1} and~\ref{FV1.2} right away.
We then give the proofs of
Theorems~\ref{MOSBE}
and~\ref{noC}, by exploiting Theorem~\ref{ADG}. Before presenting the main arguments
of the proof of Theorem~\ref{ADG}, we present some basic facts about blow-up and blow-down cones.

The paper ends with an appendix collecting some ancillary (but nontrivial) estimates
to check that some integrals are well-defined and can be differentiated.

\section{An overview on blow-up and blow-down cones for $s$-minimal sets}\label{CONISUR1}

Given an $s$-minimal set~$E$, with~$0\in\partial E$, for any~$r>0$,
we consider the family of sets~$ E_r:=E/r$.
By density estimates and monotonicity formulas, we know that~$E_r$ converges
up to subsequences to some~$E_0$ as~$r\searrow0$
and to some~$E_\infty$ as~$r\nearrow+\infty$. In addition,
the sets~$E_0$ and~$E_\infty$ are $s$-minimal
cones, see Theorem~9.2 in~\cite{CRS}.  \medskip

The cone~$E_0$, which is called in jargon
``blow-up cone'', corresponds to the action of ``looking the picture close to the origin by
performing a zoom-in''
and its flatness is equivalent to the regularity of the original set~$E$ in a neighborhood
of the origin.
Indeed, from Theorem~6.1 in~\cite{CRS}
and Theorem~5 in~\cite{barrios}, we have that 
\begin{equation}\label{up}
{\mbox{if $E_0$ is a halfspace, then $\partial E\cap B_\rho$ is $C^\infty$,}}  
\end{equation}
for some~$\rho>0$.\medskip

The cone~$E_\infty$ is called in jargon
``blow-down cone'' and corresponds to the action of ``looking the picture from far, by
making a zoom-out''
and its flatness is equivalent to the flatness of the original set~$E$.
Indeed, we have that 
\begin{equation}\label{down}
{\mbox{if $E_\infty$ is a halfspace, then $E=E_\infty$, and so~$E$ is a halfspace.}}  
\end{equation}
See e.g. Lemma 3.1 in~\cite{FV}.
\medskip

Moreover, we recall the following dimensional 
reduction
of Theorem~10.3 in~\cite{CRS} (see also Theorem~5.33
in~\cite{LOMB}):

\begin{lemma}\label{doppio cono}
Let~$E$ be an $s$-minimal cone in~$\R^{n+1}$.
Let~$p\in\partial E$,
with~$p\ne0$.

Let~$F:=E-p$ and let~$F_0$ be
the blow-up cone for~$F$.

Then~$F_0$
can be written up to a rotation as the Cartesian product~$\tilde F\times \R$,
where~$\tilde F$ is an $s$-minimal cone in~$\R^n$.

Also, if~$\partial E$ is not~$C^\infty$ at~$p$, then~$\tilde F$
is singular at the origin.\end{lemma}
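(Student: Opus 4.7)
The plan is to exploit the cone structure of~$E$ at~$p$ to extract an extra translation invariance of the blow-up limit~$F_0$ in the direction of~$p$, and then to invoke the dimensional reduction of Theorem~10.3 in~\cite{CRS} to produce an $s$-minimal cone in~$\R^n$.

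First I would establish the key algebraic identity coming from the cone structure: since~$tE=E$ for every~$t>0$, one has~$x\in F=E-p$ if and only if~$tx+(t-1)p\in F$, so the affine map~$\Phi_t(x):=tx+(t-1)p$ preserves~$F$ for every~$t>0$. Rescaling by~$1/r$, the set~$F_r=F/r$ is preserved by the map~$q\mapsto tq+(t-1)p/r$. Writing~$e:=p/|p|$ and choosing~$t=1+rs/|p|$ for a fixed~$s\in\R$ (which is positive for~$r$ small depending on~$s$) yields
$$q\in F_r\ \implies\ (1+rs/|p|)\,q+se\in F_r.$$
Taking the limit~$r\searrow 0$ and using the $L^1_{\rm loc}$ convergence~$F_r\to F_0$ supplied by Theorem~9.2 in~\cite{CRS}, I would conclude that~$F_0+se=F_0$ modulo null sets for every~$s\in\R$. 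Hence, after a rotation sending~$e$ to~$e_{n+1}$, the set~$F_0$ splits as a Cartesian product~$F_0=\tilde F\times\R$ for some~$\tilde F\subseteq\R^n$.

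Next I would apply the dimensional reduction (Theorem~10.3 in~\cite{CRS}, or equivalently Theorem~5.33 in~\cite{LOMB}) to the $s$-minimal cone~$F_0=\tilde F\times\R$, concluding that~$\tilde F$ is $s$-minimal in~$\R^n$. That~$\tilde F$ is itself a cone is immediate from the cone property of~$F_0$: since~$\lambda F_0=F_0$ for~$\lambda>0$ and the product structure is preserved under dilation, one obtains~$(\lambda\tilde F)\times\R=\tilde F\times\R$, whence~$\lambda\tilde F=\tilde F$. For the final assertion, assume~$\partial E$ fails to be~$C^\infty$ at~$p$, equivalently~$\partial F$ fails to be~$C^\infty$ at the origin; then~\eqref{up} (applied to~$F$ in place of~$E$) forces~$F_0$ not to be a halfspace. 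Since any halfspace invariant under translation in the direction~$e_{n+1}$ must be of the form~$H\times\R$ with~$H$ a halfspace of~$\R^n$, this implies~$\tilde F$ is not a halfspace, and hence~$\tilde F$ is singular at the origin.

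The main technical obstacle will be the rigorous passage to the limit~$r\searrow 0$ in the invariance property, since the set~$F_r$, the dilation factor~$(1+rs/|p|)$, and the translation by~$se$ all vary with~$r$ simultaneously. The cleanest route is to test the invariance of~$\chi_{F_r}$ against compactly supported continuous functions and let~$r\searrow 0$, exploiting that~$(1+rs/|p|)\to 1$ uniformly on compact sets together with~$\chi_{F_r}\to\chi_{F_0}$ in~$L^1_{\rm loc}$.
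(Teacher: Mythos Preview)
The paper does not supply its own proof of this lemma: it is stated as a recall of the dimensional reduction in Theorem~10.3 of~\cite{CRS} (see also Theorem~5.33 of~\cite{LOMB}). Your argument is correct and is essentially the standard one underlying those references: exploit the dilation invariance of~$E$ to show that the blow-up at a non-vertex boundary point picks up a translation invariance in the radial direction~$p/|p|$, then invoke dimensional reduction to descend to~$\R^n$, and use the contrapositive of~\eqref{up} for the singularity assertion.

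Two minor remarks. First, you use the letter~$s$ both for the fractional exponent~$s\in(0,1)$ fixed throughout the paper and for your translation parameter; rename the latter (e.g.\ to~$\sigma$) to avoid a clash. Second, the convergence in Theorem~9.2 of~\cite{CRS} is along subsequences, so strictly speaking your limit argument yields the translation invariance for every subsequential blow-up limit~$F_0$; this is of course sufficient, but it is worth saying explicitly.
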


{F}rom Lemma~\ref{doppio cono}, we obtain that if no singular $s$-minimal
cones exist in one dimension
less, then the origin is the only possible singularity of $s$-minimal cones
(i.e. if no singular $s$-minimal cones exist in~$\R^n$, then the
$s$-minimal cones in~$\R^{n+1}$ are either halfspaces or singular only at the origin):

\begin{corollary}\label{doppio cor}
Let~$E$ be an $s$-minimal cone in~$\R^{n+1}$.
Assume that there are no singular $s$-minimal cones in dimension~$ n$.
Then, $\partial E$ is $C^\infty$ outside the origin.
\end{corollary}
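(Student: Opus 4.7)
The plan is to reduce the statement to Lemma~\ref{doppio cono} point by point, using the assumption of absence of singular $s$-minimal cones in dimension $n$ in its contrapositive form. Fix an arbitrary point $p\in\partial E$ with $p\ne 0$; the goal is to show that $\partial E$ is $C^\infty$ in a neighborhood of $p$, which would yield the corollary since $p$ is arbitrary.

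To this end, I would translate and set $F:=E-p$, so that $0\in\partial F$, and $F$ remains $s$-minimal by translation invariance of $\mathrm{Per}_s$. Then I would consider a blow-up cone $F_0$ of $F$ at the origin (which exists up to subsequences by the compactness and density estimates for $s$-minimal sets recalled at the beginning of Section~\ref{CONISUR1}). By Lemma~\ref{doppio cono}, $F_0$ is, up to a rotation, of the form $\tilde F\times\R$ for some $s$-minimal cone $\tilde F$ in $\R^n$.

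At this point the hypothesis kicks in: since no singular $s$-minimal cones exist in dimension $n$, the cone $\tilde F$ must be a halfspace of $\R^n$. Consequently $F_0=\tilde F\times\R$ is a halfspace of $\R^{n+1}$. I would then apply the regularity criterion~\eqref{up} to $F$ at the origin, which yields that $\partial F$ is $C^\infty$ in a neighborhood of $0$. Translating back by $p$, we conclude that $\partial E$ is $C^\infty$ in a neighborhood of $p$.

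No step here is really an obstacle, since all the heavy machinery is packaged in Lemma~\ref{doppio cono} and in the improvement-of-flatness regularity~\eqref{up}; the only subtle point to handle with care is that $F_0$ is only defined up to subsequences, but this does not matter because the conclusion in Lemma~\ref{doppio cono} applies to any such blow-up limit, and \eqref{up} needs only the existence of one halfspace blow-up to guarantee local smoothness.
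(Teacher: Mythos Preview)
Your proof is correct and follows essentially the same route as the paper's. The only cosmetic difference is that the paper argues by contradiction and invokes the second clause of Lemma~\ref{doppio cono} (``if~$\partial E$ is not~$C^\infty$ at~$p$, then~$\tilde F$ is singular''), whereas you argue directly and make explicit the appeal to~\eqref{up}; these are two phrasings of the same implication.
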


\begin{proof}
Suppose, by contradiction, that~$\partial E$ is not~$C^\infty$ at some point~$p\in\partial E$,
with~$p\ne0$. Then Lemma~\ref{doppio cono}
produces a singular minimal cone~$\tilde F$ in~$\R^n$, which
is in contradiction with our assumption.\end{proof}

The dimension reduction of Lemma~\ref{doppio cono}
can be strengthen in case of Lipschitz subgraphs. To this end,
we first give a detailed blow-up argument when a Lipschitz
assumption is taken:

\begin{lemma}\label{0p045:A}
Let~$E$ be an $s$-minimal cone in~$\R^{n+1}$.
Let~$\kappa\in\R$ and~$p=(0,\dots,0,1,\kappa)\in\partial E$
and suppose that~$\partial E$ is a Lipschitz subgraph in a neighborhood of~$p$.

Let~$F:=E-p$ and let~$F_0$ be
the blow-up cone for~$F$.
Then, $F_0$ is the subgraph of some Lipschitz function~$v_0:\R^n\to\R$.
Furthermore, there exists a Lipschitz function~$ v^\star:\R^{n-1}\to\R$ such that
\begin{equation} \label{PA2L}
v_0(\hat x,x_n)= v^\star(\hat x)+\kappa x_n,\end{equation}
for any~$(\hat x,x_n)\in\R^{n-1}\times\R$.

In addition, for any~$t\ge0$ and any~$\hat x\in\R^{n-1}$,
\begin{equation}\label{0weiuu2y}
v^\star(t\hat x)=tv^\star(\hat x).
\end{equation}

Furthermore, if~$\left(-\frac\pi2,\frac\pi2\right)\ni\theta:=\arctan\kappa$
and we consider the rotation given by
\begin{equation}\label{ROROT}\begin{split}&
\R^{n-1}\times\R\times\R\ni
(\hat y, y_n,y_{n+1}):={\mathcal{R}}_\theta(\hat x, x_n,x_{n+1}),\\&
\qquad{\mbox{ with}}\qquad
\hat y:=\hat x,\qquad
\left(\begin{matrix} y_n\\ y_{n+1}
\end{matrix}\right)
:=
\left(\begin{matrix} \cos\theta & \sin\theta
\\ -\sin\theta &
\cos\theta
\end{matrix}\right)
\left(\begin{matrix} x_n \\ x_{n+1}
\end{matrix}\right),\end{split}\end{equation}
we have that~${\mathcal{R}}_\theta(F_0)=F^\star\times\R$, where~$F^\star$
is an~$s$-minimal cone in~$\R^n$ 
(with variables~$(\hat y,y_{n+1})$)
and it is the subgraph~$
\{ y_{n+1}<\cos\theta v^\star(\hat y)\}$.
\end{lemma}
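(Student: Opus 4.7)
The plan is to combine a Lipschitz Ascoli--Arzelà blow-up with the dimensional reduction already provided by Lemma~\ref{doppio cono}, and then to perform an explicit rotation computation to identify $F^\star$.

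\textbf{Step 1 (blow-up yields a Lipschitz subgraph).} Since $\partial E$ is a Lipschitz subgraph in a neighborhood of $p$, one can write $E=\{x_{n+1}<u\}$ locally for some $u$ which is Lipschitz on a neighborhood of $(0,\dots,0,1)\in\R^n$ and satisfies $u(0,\dots,0,1)=\kappa$. Setting $\tilde u(x):=u(x_1,\dots,x_{n-1},x_n+1)-\kappa$, the set $F=E-p$ is locally the subgraph of $\tilde u$, which is Lipschitz near $0$ with $\tilde u(0)=0$. The dilated set $F_r=F/r$ is then, on a neighborhood that grows with $1/r$, the subgraph of $u_r(x):=\tilde u(rx)/r$; each $u_r$ inherits the same Lipschitz constant as $\tilde u$ and vanishes at the origin, and is therefore uniformly bounded on every compact of $\R^n$. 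A diagonal Ascoli--Arzelà argument produces, along the same subsequence that defines the blow-up cone, a globally Lipschitz limit $v_0:\R^n\to\R$, and the $L^1_{\mathrm{loc}}$ convergence $F_r\to F_0$ forces $F_0=\{x_{n+1}<v_0(x_1,\dots,x_n)\}$.

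\textbf{Step 2 (the axis of invariance is $\R p$).} Because $E$ is a cone at the origin and $p\in\partial E$, the whole ray $\{tp:t\ge 0\}$ lies in $\partial E$, so $\{(t-1)p:t\ge 0\}\subset\partial F$, and dilating by $1/r$ gives $\{sp:s\ge -1/r\}\subset\partial F_r$; in the limit $r\searrow 0$ this fills out the full line, yielding $\R p\subset\partial F_0$. Lemma~\ref{doppio cono} applied to $E$ at $p$ then gives that $F_0$ splits, up to a rotation, as $\tilde F\times\R$ for some $s$-minimal cone $\tilde F\subset\R^n$; the presence of $\R p\subset\partial F_0$ and the cone property of $F_0$ at $0$ force the $\R$-factor to be aligned with $p$ itself.

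\textbf{Step 3 (form of $v_0$ and homogeneity of $v^\star$).} By construction the rotation $\mathcal R_\theta$ in \eqref{ROROT} sends $p=(0,\dots,0,1,\kappa)$ to $(0,\dots,0,\sec\theta,0)$, i.e.\ onto the $y_n$-axis, so $\mathcal R_\theta(F_0)=F^\star\times\R$ with $F^\star$ an $s$-minimal cone in the variables $(\hat y,y_{n+1})$. Translation invariance of $F_0$ in direction $p$, read on the subgraph, becomes the functional equation $v_0(\hat x,x_n+t)=v_0(\hat x,x_n)+t\kappa$ for every $t\in\R$; setting $v^\star(\hat x):=v_0(\hat x,0)$ gives \eqref{PA2L}, with $v^\star$ Lipschitz since $v_0$ is. The cone property $tF_0=F_0$ for $t>0$ translates into the positive $1$-homogeneity of $v_0$, which combined with \eqref{PA2L} isolates the identity \eqref{0weiuu2y} for $v^\star$.

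\textbf{Step 4 (identification of $F^\star$) and main obstacle.} Inverting \eqref{ROROT}, the point $(\hat y,y_n,y_{n+1})$ corresponds to $x_n=y_n\cos\theta-y_{n+1}\sin\theta$ and $x_{n+1}=y_n\sin\theta+y_{n+1}\cos\theta$; inserting into $x_{n+1}<v^\star(\hat x)+\kappa x_n$ and using $\kappa=\tan\theta$ makes the $y_n$-terms cancel, and the residual relation $\cos\theta+\sin^2\theta/\cos\theta=\sec\theta$ collapses the inequality to $y_{n+1}<\cos\theta\,v^\star(\hat y)$, as required. The only genuinely delicate point is Step~2: one must make sure the product decomposition afforded by Lemma~\ref{doppio cono} really splits in direction $p$ rather than in some spurious other direction; this is guaranteed by exhibiting the full line $\R p$ inside $\partial F_0$, a fact that crucially uses the cone structure of the original set $E$ at the origin together with $p\in\partial E$.
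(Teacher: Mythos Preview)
Your approach differs from the paper's: rather than the explicit limit computation carried out there (writing $v_r(\hat x,\tau)$ via the cone identity $u(t\hat y,t)=tu(\hat y,1)$ and passing to the limit term by term to obtain~\eqref{PA2L} directly), you invoke Lemma~\ref{doppio cono} to obtain a product structure and then read~\eqref{PA2L} off from translation invariance in direction~$p$. This is a legitimate and somewhat more conceptual route; Steps~1, 3, and~4 are correct.

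The gap is precisely at the point you flag in Step~2. Exhibiting the line $\R p\subset\partial F_0$ together with the cone property does \emph{not} force the $\R$-factor in a decomposition $F_0\cong\tilde F\times\R$ to lie along~$p$. If $F_0$ were a halfspace, every line through the origin in its boundary would satisfy your hypothesis, yet the product direction is undetermined; more generally, if $F_0=\tilde F\times\R e$ with $e\not\parallel p$ and $\tilde F$ a cone in $e^\perp$, then $\R p\subset\partial F_0$ merely says that the projection of $p$ onto $e^\perp$ lies in $\partial\tilde F$, which places no constraint on~$e$.

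What you actually need in Step~3, and what is true, is that $F_0$ is \emph{translation invariant} in direction~$p$, namely $F_0+sp=F_0$ for every $s\in\R$. This is the real content of the proof of Lemma~\ref{doppio cono} (Theorem~10.3 in~\cite{CRS}), not a consequence of $\R p\subset\partial F_0$. You can also derive it directly from the cone property of~$E$: from $tE=E$ one gets $tF=F+(1-t)p$ for every $t>0$; taking $t=1+rs$ and dividing by~$r$ yields $(1+rs)F_r=F_r-sp$, and letting $r\searrow0$ gives $F_0=F_0-sp$. With this in place, your Steps~3 and~4 go through verbatim.
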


\begin{proof} We let~$e_n:=(0,\dots,0,1)\in\R^n$ and~$e_{n+1}:=(0,\dots,0,0,1)\in\R^{n+1}$.
In this way, we can write~$p=(e_n,0)+\kappa e_{n+1}$.
We let~$u$ be the subgraph describing~$E$ near~$p$ and~$v(x):=u(x+ e_n)-\kappa$.
The function~$v$ describes the subgraph of~$F$ near the origin.
Let~$M>0$ be the Lipschitz constant of~$u$ in~$B_\varrho(e_n)$, for some~$\varrho>0$.
Then the Lipschitz constant of~$v$ in~$B_\varrho$ is bounded by~$M$.

We also define~$v_r(x):=\frac{v(rx)}{r}$. By construction, $v(0)=u(e_n)-\kappa=0$
and so~$v_r(0)=0$. Also, the Lipschitz constant of~$v$ in~$B_{\varrho/r}$ is bounded by~$M$.
Consequently, from the Arzel\`a-Ascoli Theorem,
up to subsequence,
as~$r\searrow0$,~$v_r$ converges locally uniformly to some Lipschitz function~$v_0$.

Now, since~$E$ is a cone, for any~$\hat y\in\R^{n-1}$ and~$t\ge0$,
we have that
$$ u(t \hat y, t)=t u(\hat y,1).$$
Accordingly, fixed~$\hat x\in\R^{n-1}$ and~$\tau\ge-\frac1r$, taking~$t:=r\tau+1\ge0$
and~$\hat y:=\frac{r \hat x}{r\tau+1}=\frac{r\hat x}{t}$, we see that
\begin{eqnarray*}
&& v(r\hat x, r\tau)=u(r\hat x, r\tau+1)-\kappa=
u\big( t\hat y, t\big)-\kappa=t u(\hat y,1)-\kappa=
r\tau u(\hat y,1)+u(\hat y,1)-\kappa\\
&&\qquad\qquad\qquad =r\tau u\left(\frac{r \hat x}{r\tau+1},1\right)+u\left(\frac{r \hat x}{r\tau+1},1\right)-\kappa
\end{eqnarray*} 
and thus
\begin{equation}\label{G1} 
v_r(\hat x,\tau)=
\tau u\left(\frac{r \hat x}{r\tau+1},1\right)+
\frac{
u\left(\frac{r \hat x}{r\tau+1},1\right)-u({r \hat x},1)}{r}+
\frac{
u({r \hat x},1)-\kappa}{r}.
\end{equation}
So, we now fix~$\tau\in\R$ and~$\hat x\in\R^{n-1}$. We take~$r>0$ so small that~$\tau\ge-\frac1r$.
Also, for small~$r$, we have that~$\left(\frac{r \hat x}{r\tau+1},1\right)$
and~$({r \hat x},1)$ belong to~$ B_{\varrho/2}(e_n)$
and therefore
\begin{equation}\label{G2} 
\left|\frac{
u\left(\frac{r \hat x}{r\tau+1},1\right)-u({r \hat x},1)}{r}\right|\le
\frac{M\,
\left|\frac{r \hat x}{r\tau+1}-{r \hat x}\right|}{r}\le M\,|\hat x|\,|\tau|\,r,\quad{\mbox{ which
is infinitesimal as }}r\searrow0.
\end{equation}
We also set~$w_r(\hat x):=\frac{
u({r \hat x},1)-\kappa}{r}$. Notice that~$w_r(0)=\frac{
u(0,1)-\kappa}{r}=0$, and~$w_r$ has Lipschitz constant locally bounded by~$M$ when~$r$ is small.
Thus, up to a subsequence, we suppose that~$w_r$ converges locally uniformly to
a Lipschitz function~$v^\star$.

Using this information and~\eqref{G2}, we can pass to the limit in~\eqref{G1}
and conclude that
\begin{eqnarray*} v_0(\hat x,\tau)&=&\lim_{r\searrow0}v_r(\hat x,\tau)\\
&=& \lim_{r\searrow0}
\tau u\left(\frac{r \hat x}{r\tau+1},1\right)+
\frac{
u\left(\frac{r \hat x}{r\tau+1},1\right)-u({r \hat x},1)}{r}+
w_r(\hat x)\\&=&
\tau u(0,1)+
v^\star(\hat x)\\
&=& \kappa\tau+v^\star(\hat x)
.\end{eqnarray*}
This establishes~\eqref{PA2L}, as desired.

Now, from~\eqref{PA2L} and the fact that~$F_0$ is a cone, we have that
\begin{eqnarray*}
v^\star(t\hat x)+\kappa t x_n
=
v_0(t\hat x,tx_n)=tv_0(\hat x,x_n)=t\big( v^\star(\hat x)+\kappa x_n\big)=
tv^\star(\hat x)+\kappa t x_n,
\end{eqnarray*}
from which~\eqref{0weiuu2y} easily follows.

Now, we consider the rotation in~\eqref{ROROT},
and we perform a geometric argument to complete the proof of Lemma~\ref{0p045:A},
as described by Figure~\ref{DESS}.

\begin{figure}
    \centering
    \includegraphics[width=11cm]{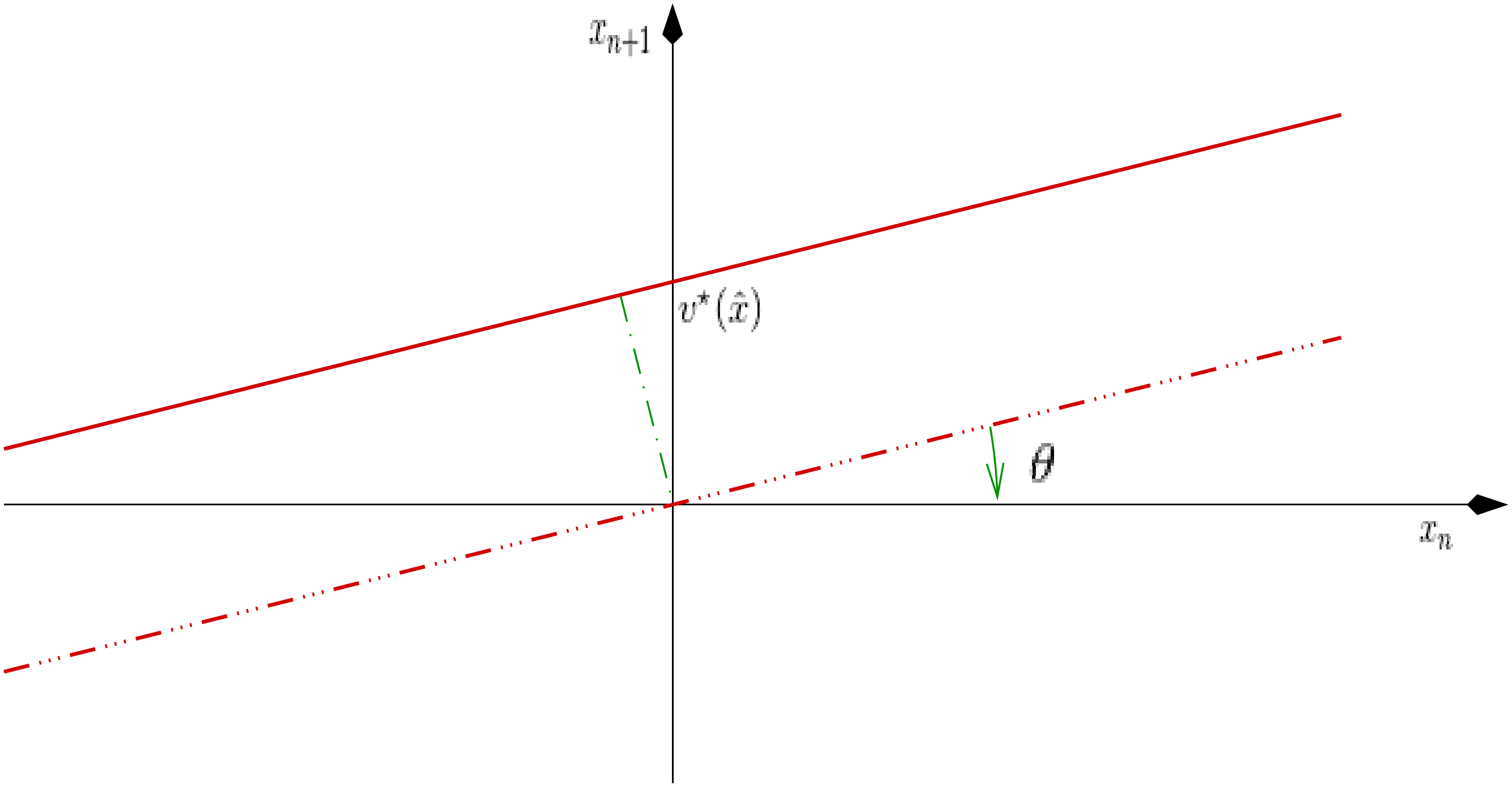}
    \caption{\it {The effect of the rotation~${\mathcal{R}}_\theta$
on the plane~$(x_n,x_{n+1})$ for a fixed~$\hat x\in\R^{n-1}$.}}
    \label{DESS}
\end{figure}

The analytic details go as follows.
We substitute~$\hat x=\hat y$, $x_{n}=\cos\theta\, y_n-\sin\theta\, y_{n+1}$
and~$x_{n+1}=\sin\theta \,y_n+\cos\theta\, y_{n+1}$. In this way, we have that
\begin{eqnarray*}
{\mathcal{R}}_\theta(F_0) 
&=& {\mathcal{R}}_\theta\Big( \big\{ x_{n+1}< v_0(\hat x,x_n)\big\}\Big)
\\
&=& {\mathcal{R}}_\theta\Big( \big\{ x_{n+1}< v^\star(\hat x)+
\tan\theta\,x_n\big\}\Big)
\\
&=& \big\{ \sin\theta \,y_n+\cos\theta\, y_{n+1}< 
v^\star(\hat y)+\tan\theta\,( \cos\theta\, y_n-\sin\theta\, y_{n+1})\big\}\\
&=& \big\{ (\cos\theta
+
\tan\theta\,\sin\theta) y_{n+1}
< 
v^\star(\hat y)\big\}
\\ &=& \big\{  y_{n+1}
< \cos\theta\,
v^\star(\hat y)\big\}\\
&=& \{(\hat y,y_n,y_{n+1}) {\mbox{ with }} 
y_{n+1}
< \cos\theta\,
v^\star(\hat y) {\mbox{ and }}y_n\in\R\big\}.
\end{eqnarray*}
This gives the desired expression for~$F^\star$.
Also, since~$F_0$ is $s$-minimal in~$\R^{n+1}$, so is~${\mathcal{R}}_\theta(F_0)=
F^\star\times\R$ and then~$F^\star$ is $s$-minimal in~$\R^n$
by dimension reduction
(see Theorem~10.3 in~\cite{CRS} or Theorem~5.33
in~\cite{LOMB}).
\end{proof}

\section{Proof of Theorem~\ref{ADG}}\label{565441}

The idea of the proof, inspired by the work in~\cite{DG1} (with the very minor
correction in~\cite{DG2}),
is based on two observations:
\begin{itemize}
\item the derivatives of~$u$ should satisfy a linearized equation,
\item since~$E$ is a cone, these derivatives are
positively homogeneous of degree zero, and therefore
their maxima and minima along the unit sphere
must be global maxima and minima.\end{itemize}
Then, if the linearized equation possesses a maximum, or minimum, principle,
the derivatives of~$u$ are necessarily constant, hence~$u$ must be affine
and~$E$ is proved to be a halfspace, as desired.\medskip

In our setting, the goal is thus to write an appropriate equation for~$u$
and for its derivatives and check that a suitable maximum principle
is satisfied for such linearized equation. The main idea for this
is that the fractional mean curvature of~$E$ at any boundary point
must vanish (see Theorem~5.1~\cite{CRS}
or Corollary~4.13 in~\cite{LOMB}). Then, by some computation one can write
such fractional mean curvature directly in term of the function~$u$:
this expression is not completely explicit, since one cannot write ``simple expressions''
of all the integrals involved, nevertheless the formulas that we provide
(which can be useful in other contexts as well) are suitable to carry out detailed analytic arguments.
Indeed, outside the origin the function~$u$ is differentiable
and the fractional mean curvature equation can be linearized.
A careful inspection of the sign of the integrands reveals that this linearized
equation ``charges positive masses'' and so it possess a sort of maximum principle,
which in turn provides the desired result.\medskip

In further details, the analytical ingredients which
effectively implement this general strategy go as follows. 
We write~$E$ as a subgraph, as in~\eqref{GRAPH}.
First, we establish
Theorem~\ref{ADG} under the additional assumption that
\begin{equation}\label{REGA}
u\in C^{3}(\R^n\setminus\{0\}).\end{equation}
Since~$E$ is $s$-minimal, at any point~$X=(x,u(x))\in\R^{n+1}$,
with~$x\in \R^n\setminus\{0\}$, we have that the fractional mean
curvature of~$E$ at~$X$ must vanish (see Theorem~5.1 
in~\cite{CRS} or Corollary~4.13 in~\cite{LOMB}).
We also consider the tangent halfspace at~$X$, namely
$$ L_X:=\{ Y=(y,y_{n+1})\in\R^n\times\R {\mbox{ s.t. }} y_{n+1}<\nabla u(x)\cdot (y-x)+u(x)\}.$$
Of course, the fractional mean
curvature of~$L_X$ at~$X$ must vanish as well.
Notice in addition that
$$ \chi_E-\chi_L=\left\{
\begin{matrix}
1 & {\mbox{ in }}E\cap L^c\\
-1 & {\mbox{ in }}L\cap E^c\\
0 & {\mbox{ otherwise }}
\end{matrix}
\right. $$
and similarly
$$ \chi_{E^c}-\chi_{L^c}=\left\{
\begin{matrix}
1 & {\mbox{ in }}E^c\cap L\\
-1 & {\mbox{ in }}L^c\cap E\\
0 & {\mbox{ otherwise }}
\end{matrix}
\right. $$
These considerations imply that, for any~$X=(x,x_{n+1})=(x,u(x))\in\R^n\times\R$,
with~$x\ne0$,
\begin{equation}\label{E:1}\begin{split}
0\; &= \frac12 \int_{\R^N} \frac{\big(\chi_{E^c}(Y)-\chi_{L^c}(Y)\big)
-\big(\chi_{E}(Y)-\chi_{L}(Y)\big) }{|X-Y|^{N+s}}\,dY\\
&= \int_{E^c\cap L} \frac{dY}{|X-Y|^{n+1+s}}-\int_{E\cap L^c} \frac{dY}{|X-Y|^{n+1+s}}\\
&= \int_{{Y=(y,y_{n+1})\in\R^n\times\R}\atop{u(y)\le
y_{n+1}<\nabla u(x)\cdot (y-x)+u(x)
}} \frac{dY}{|X-Y|^{n+1+s}}-\int_{{Y=(y,y_{n+1})\in\R^n\times\R}\atop{\nabla u(x)\cdot (y-x)+u(x)\le
y_{n+1}<u(y)
}} \frac{dY}{|X-Y|^{n+1+s}} 
\end{split}\end{equation}
Now, fixed~$X=(x,x_{n+1})\in\R^n\times\R$, given two functions~$f$, $g:\R^n\to\R$,
with~$f(x)=g(x)=x_{n+1}$, we 
use the substitutions
$$ \R\ni\tau:=\frac{y_{n+1}-x_{n+1}}{|y-x|} \quad{\mbox{ and }}\quad
\R^n\ni \vartheta:=y-x$$
to make the following calculation:
\begin{equation}\label{E:2}
\begin{split}
\int_{{Y=(y,y_{n+1})\in\R^n\times\R}\atop{f(y)\le
y_{n+1}<g(y)}} \frac{dY}{|X-Y|^{n+1+s}}
\;&= \int_{{Y=(y,y_{n+1})\in\R^n\times\R}\atop{f(y)\le
y_{n+1}<g(y)}} \frac{dY}{|y-x|^{n+1+s}\left( 1+\frac{|y_{n+1}-x_{n+1}|^2}{|y-x|^2}\right)^{\frac{n+1+s}{2}}}\\
&=\int_{{(\vartheta,\tau)\in\R^n\times\R}\atop{
\frac{f(x+\vartheta)-x_{n+1}}{|\vartheta|}\le
\tau<\frac{g(x+\vartheta)-x_{n+1}}{|\vartheta|}}} \frac{d\vartheta\,d\tau}{|\vartheta|^{n+s}
\left( 1+\tau^2\right)^{\frac{n+1+s}{2}}}\\&=
\int_{\R^n}\frac{
F\left( \frac{g(x+\vartheta)-x_{n+1}}{|\vartheta|}\right)
-F\left( \frac{f(x+\vartheta)-x_{n+1}}{|\vartheta|}\right)
}{|\vartheta|^{n+s} }\,d\vartheta\end{split}\end{equation}
where, for any~$r\in\R$, we used the notation\footnote{The function~$F$ was introduced
in formula~(49) of~\cite{barrios}.
See also~\cite{CFW} for other applications of such function
in related contexts.}
\begin{equation}\label{DEFF} 
F(r):=\int_0^r \frac{d\tau}{\left( 1+\tau^2\right)^{\frac{n+1+s}{2}}}.\end{equation}
Making use of~\eqref{E:2} into~\eqref{E:1},
with the choice~$f(y):=u(y)$ and~$g(y):=\nabla u(x)\cdot(y-x)+u(x)$, we conclude that
\begin{equation}\label{PRE:3} {\mathcal{F}}[u](x):=\int_{\R^n}\frac{
F\left( \frac{u(x+\vartheta)-u(x)}{|\vartheta|}\right)
-F\left( \frac{\nabla u(x)\cdot\vartheta}{|\vartheta|}\right)
}{|\vartheta|^{n+s} }\,d\vartheta =0,\end{equation}
for any~$x\in\R^n\setminus\{0\}$.

To avoid integrals in the principal value sense,
it is now convenient to produce a ``more symmetric'' version of~\eqref{PRE:3},
by taking into account the fact that~$F'(r)=\frac{1}{(1+r^2)^{\frac{n+1+s}{2}}}$, which
is an even function,
and so~$F$ is an odd function. Therefore, the change of variable~$\vartheta\mapsto-\vartheta$ gives that
\begin{eqnarray*}
\int_{\R^n}\frac{
F\left( \frac{u(x+\vartheta)-u(x)}{|\vartheta|}\right)
-F\left( \frac{\nabla u(x)\cdot\vartheta}{|\vartheta|}\right)
}{|\vartheta|^{n+s} }\,d\vartheta&=&
\int_{\R^n}\frac{
F\left( \frac{u(x-\vartheta)-u(x)}{|\vartheta|}\right)
-F\left( -\frac{\nabla u(x)\cdot\vartheta}{|\vartheta|}\right)
}{|\vartheta|^{n+s} }\,d\vartheta\\
&=&
\int_{\R^n}\frac{
F\left( \frac{u(x-\vartheta)-u(x)}{|\vartheta|}\right)
+F\left(\frac{\nabla u(x)\cdot\vartheta}{|\vartheta|}\right)
}{|\vartheta|^{n+s} }\,d\vartheta
\end{eqnarray*}
Summing up
this to~\eqref{PRE:3}, we conclude that
\begin{equation}\label{E:3} 2{\mathcal{F}}[u](x):=\int_{\R^n}\frac{
F\left( \frac{u(x+\vartheta)-u(x)}{|\vartheta|}\right)+
F\left( \frac{u(x-\vartheta)-u(x)}{|\vartheta|}\right)}{|\vartheta|^{n+s} }\,d\vartheta =0.\end{equation}
One can check that
\begin{equation}\label{WP1}
{\mbox{the integral in~\eqref{E:3} is
well-posed,}}
\end{equation}
see Appendix~\eqref{AWP1}.
Then, for any~$j\in\{1,\dots,n\}$ we take\footnote{We also observe that, in~\eqref{E:4},
one can also write the short-hand 
expression
\[ {\mathcal{L}}_u [v](x)\,=\,2\int_{\R^n}\frac{
F'\left( \frac{u(x+\vartheta)-u(x)}{|\vartheta|}\right)\,\Big( v(x+\vartheta)-v(x)\Big)
}{|\vartheta|^{n+s+1} }\,d\vartheta,\]
provided that the integral is taken in the principal value sense.} a derivative of~\eqref{E:3}
and we obtain that~$v:=\partial_j u$ satisfies the linearized equation
\begin{equation}\label{E:4}\begin{split} {\mathcal{L}}_u [v](x)\,:=\;&
\int_{\R^n}\frac{
F'\left( \frac{u(x+\vartheta)-u(x)}{|\vartheta|}\right)\,\Big( v(x+\vartheta)-v(x)\Big)+
F'\left( \frac{u(x-\vartheta)-u(x)}{|\vartheta|}\right)\,\Big( v(x-\vartheta)-v(x)\Big)
}{|\vartheta|^{n+s+1} }\,d\vartheta\\ =\;&0.\end{split}\end{equation}
The fact that the derivative in~\eqref{E:3} can be safely taken, thus leading to~\eqref{E:4}
is checked in details in Appendix~\ref{APPP2}.

Now we claim that
\begin{equation}\label{E:5}
{\mbox{$v$ is constant.}}
\end{equation}
The proof of~\eqref{E:5} is based on the maximum principle, applied to the linearized
equation in~\eqref{E:4}. In order to achieve this aim,
we take~$\bar x\in S^{n-1}$ be such that
$$ v(\bar x)=\max_{S^{n-1}} v.$$
Since~$E$ is a cone, it follows that~$u$ is positively homogeneous of degree one,
and so~$v$ is positively homogeneous of degree zero.
As a result, it follows that, for any~$x\in\R^n\setminus\{0\}$,
$$ v(x)=v\left( \frac{x}{|x|}\right)\le \max_{S^{n-1}} v=v(\bar x).$$
Consequently, we have that
$$ v(\bar x\pm\vartheta)-v(\bar x)\le 0,$$
and this expression vanishes identically for~$\vartheta\in\R^n$
if and only if~$v$ is constant.
Also, by~\eqref{DEFF}, we have that
$$ F'\left( \frac{u(x\pm\vartheta)-u(x)}{|\vartheta|}\right)>0.$$
Therefore, the map
$$ \R^n\ni\vartheta\longmapsto\frac{
F'\left( \frac{u(\bar x+\vartheta)-u(\bar x)}{|\vartheta|}\right)\,\Big( v(\bar x+\vartheta)-v(\bar x)\Big)+
F'\left( \frac{u(\bar x-\vartheta)-u(\bar x)}{|\vartheta|}\right)\,\Big( v(\bar x-\vartheta)-v(\bar x)\Big)
}{|\vartheta|^{n+s+1} }$$
is non-positive, and strictly negative on a positive measure set of~$\vartheta$'s,
unless~$v$ is constant.
This observation and~\eqref{E:4} imply~\eqref{E:5}.

In view of~\eqref{E:5},
we have therefore completed the proof of Theorem~\ref{ADG} under the additional
assumption in~\eqref{REGA}.

We now remove this additional hypothesis. 
To this end, we first make the following general observation:
\begin{equation}\label{OSSE}\begin{split}&
{\mbox{if~$G$ is an $s$-minimal set}}\\&{\mbox{which is a Lipschitz continuous subgraph
in a neighborhood of the origin, with~$0\in\partial G$,}}\\&
{\mbox{then its blow-up cone~$G_0$ is a subgraph
and it is globally Lipschitz continuous.}}
\end{split}\end{equation}
To check this, let us suppose that~$G$, in some~$B_\rho\subset\R^n$, is the subgraph
of a function~$v$ with Lipschitz constant bounded by
some~$M>0$
in~$B_\rho$.
Then~$G_r:=G/r$ is the subgraph of~$v_r(x):=\frac{v(r x)}{r}$
in~$B_{\rho/r}$. Since~$v_r(0)=0$ and the Lipschitz constant of~$v_r$ is bounded 
by~$M$ in~$B_{\rho/r}$, it follows from the Arzel\`a-Ascoli Theorem
that, as~$r\searrow0$,~$v_r$ converges locally uniformly to some~$v_0$,
which has Lipschitz constant bounded by~$M$.
These considerations prove~\eqref{OSSE}.

Now, to remove the additional assumption
in~\eqref{REGA}, we
suppose that there exists~$p\in\partial E$, $p\ne0$,
at which~$E$ is not of class~$C^{3}$.
By Lemma~\ref{0p045:A}, by a blow-up at~$p$,
we obtain a Lipschitz subgraph~$F^\star$ which is an $s$-minimal cone in~$\R^n$
that is singular at the origin.

Suppose first that~$F^\star$ is~$C^3$ outside the origin.
But then we could apply 
Theorem~\ref{ADG} under the additional
assumption in~\eqref{REGA} and conclude that~$F^\star$ is a halfspace, which is a contradiction.

Hence, there must be a boundary point~$q\ne0$ at which~$F^\star$ is not~$C^3$.
But then we can apply again Lemma~\ref{0p045:A} (i.e. we can blow-up $ F^\star$
at the point~$q$),
and find a new $s$-minimal cone in dimension~$n-1$.
Hence, by repeating the blow-up argument at most
a finite number of times, we reach a contradiction, thus completing the proof
of Theorem~\ref{ADG} in its full generality.~\hfill$\Box$

\section{Proof of Theorem~\ref{FV1.1}}

The blow-up cone~$E_0$ constructed before formula~(4.3) in~\cite{FV}
satisfies the assumptions of Theorem~\ref{ADG} and so it is a halfspace.
{F}rom this and~\eqref{down}, Theorem~\ref{FV1.1} plainly follows.~\hfill$\Box$

\section{Proof of Theorem~\ref{FV1.2}}

The blow-down cone~$E_\infty$ constructed before formula~(5.1) in~\cite{FV}
satisfies the assumptions of Theorem~\ref{ADG} and so it is a halfspace.
{F}rom this, Theorem~\ref{FV1.2} plainly follows.~\hfill$\Box$

\section{Proof of Theorem~\ref{MOSBE}}

Without loss of generality, we suppose that~$B=B_1$.
We look at the blow-down limit of~$E$. That is, up to a translation,
we assume that the origin belongs to the boundary of~$E$
and we consider the blow-down cone~$E_\infty$.
We claim that
\begin{equation}\label{M:0}
{\mbox{$E_\infty$ is a Lipschitz subgraph.}}
\end{equation}
For this, we write
\begin{eqnarray*} && E_r=\frac{E}{r}=\left\{ (y,y_{n+1})\in\R^{n+1}
{\mbox{ s.t. }} y=\frac{x}{r},\;\,
y_{n+1}=\frac{x_{n+1}}{r}
{\mbox{ and }} x_{n+1}<u(x)
\right\} \\
&&\qquad\qquad\qquad=\left\{
\left(y,\,\frac{u(ry)}{r}-\mu\right), {\mbox{ with }}
y\in\R^n{\mbox{ and }}\mu>0\right\}.
\end{eqnarray*}
We recall~\eqref{REGMOS}, which also implies that~$u$ is continuous along~$\partial B\subset
\R^n\setminus B$,
and we
take~$M>0$ which controls~$|u|$ along~$\partial B_1$
and the Lipschitz constant of~$u$ in~$\R^n\setminus B_1$.
Then, for any~$x\in\R^n\setminus B_1$,
\begin{equation}\label{980we02}
\begin{split}
& |u(x)|\le \left| u\left(\frac{x}{|x|}\right)\right|+\left| u(x)- u\left(\frac{x}{|x|}\right)\right|
\le M+M\,\left| x- \frac{x}{|x|}\right|\\
&\qquad=M+M\,\left| \frac{x}{|x|}\big( |x|-1\big)\right|
=M+M\,\big| |x|-1\big|=M+M\,\big( |x|-1\big)=M\,|x|.
\end{split}
\end{equation}
Now, we consider the function~$\R^n\ni y\mapsto\frac{u(ry)}{r}=:u_r(y)$
and we observe that~$|u_r(x)|\le M\,|x|$ for any~$x\in\R^n\setminus B_{1/r}$, due to~\eqref{980we02}.
Furthermore, the Lipschitz constant of~$u_r$ in~$x\in\R^n\setminus B_{1/r}$
is bounded by~$M$,
thanks to~\eqref{REGMOS}. {F}rom this
and the Arzel\`a-Ascoli Theorem, up to a subsequence,
we have that~$u_r$ converges locally uniformly to some function~$u_\infty$,
with Lipschitz constant in~$\R^n\setminus\{0\}$ bounded by~$ M$,
and~$E_\infty$ is a subgraph described by the function~$u_\infty$.
This proves~\eqref{M:0}.

Now, from~\eqref{M:0} and Theorem~\ref{ADG}, we infer that~$E_\infty$ is a halfspace.
This and the blow-down theory in~\eqref{down}
imply that~$E$ is a halfspace, as desired.~\hfill$\Box$

\section{Proof of Theorem~\ref{noC}}\label{H1A}

Suppose that~$0\in\partial E$ and let~$E_0$ be the blow-up cone of~$E$.
Since~$E$ is a subgraph, from Lemma~16.3 in~\cite{GIU}, we know that~$E_0$
is a quasi-subgraph, i.e. 
there exists~$u_0:\R^n\to \R\cup\{-\infty\}\cup\{+\infty\}$ such that
$$ E_0=\{x_{n+1}<u(x_1,\dots,x_n)\}.$$
We also stress that the only possible singular point for~$E_0$
is the origin, thanks to
Corollary~\ref{doppio cor} and our assumption on the cones
in one dimension less.

Now, we distinguish two cases: either~$E_0$ is a subgraph, or not.
In the first case, since there are no singular cones in
one dimension less, then~$E_0$ satisfies the assumption of Theorem~\ref{ADG},
and consequently must be a halfspace. Consequently, by~\eqref{up}, it follows that~$E$ is $C^\infty$
near the origin.

Viceversa, if~$E_0$ is not a subgraph, 
there exists~$\tau > 0$ such that~$\partial E_0$ and~$\partial E_0+\tau e_n$
touch at some
point, different than the origin. 
Hence, we can apply the strong maximum principle
at such touching point and conclude that
\begin{equation}
E_0=\tilde E\times \R,
\end{equation}
where~$\tilde E$ is an $s$-minimal cone in one dimension less (see e.g. footnote~3
in~\cite{FV} for the details on the maximum principle).
By assumption, $\tilde E$ has to be a halfspace, which means that~$E_0$
is a (vertical) halfspace.
Once again,
by the theory
of blow-up cones in~\eqref{up}, it follows that~$E$ is a $C^\infty$
near the origin (in vertical coordinates).~\hfill$\Box$

\begin{appendix}

\section{Proof of~\eqref{WP1}}\label{AWP1}

We point out that, since~$F$ is odd
and~\eqref{REGA} holds true, then
for any~$x\ne0$ and~$\vartheta\in B_\varrho(x)$,
with~$\varrho:=\frac12\min\{1, |x|\}$,
\begin{eqnarray*}
&& \frac{
\left|
F\left( \frac{u(x+\vartheta)-u(x)}{|\vartheta|}\right)+
F\left( \frac{u(x-\vartheta)-u(x)}{|\vartheta|}\right)\right|}{|\vartheta|^{n+s} }
=
\frac{
\left|
F\left( \frac{u(x+\vartheta)-u(x)}{|\vartheta|}\right)-
F\left( \frac{u(x)-u(x-\vartheta)}{|\vartheta|}\right)\right|}{|\vartheta|^{n+s} }
\\ &&\qquad\le
\frac{
\const\left|
\left( \frac{u(x+\vartheta)-u(x)}{|\vartheta|}\right)-
\left( \frac{u(x)-u(x-\vartheta)}{|\vartheta|}\right)\right|}{|\vartheta|^{n+s} }=
\frac{
\const\big|
u(x+\vartheta)+u(x-\vartheta)-2u(x)\big|}{|\vartheta|^{n+s+1} }
\le\frac{\const |\vartheta|^{2}}{|\vartheta|^{n+s+1}},
\end{eqnarray*}
which is locally integrable near the origin as a function of~$\vartheta$. This computation
shows~\eqref{WP1}.~\hfill$\Box$

\section{Proof of~\eqref{E:4}}\label{APPP2}

We observe that, for all~$A$, $B$, $a$, $b\in\R$,
since~$F'$ is even, it holds that
\begin{equation}\label{AB:AV} \begin{split}
&
|F'(A)B+F'(a)b|\le \big|F'(A)(B+b)\big|+\big|(F'(A)-F'(a))b\big|\\
&\qquad=\big|F'(A)(B+b)\big|+\big|(F'(A)-F'(-a))b\big|
\le \const \Big( |B+b|+ |A+a|\,|b|\Big).
\end{split}\end{equation}
We take
\begin{eqnarray*}
&& A:=\frac{u(x+\vartheta)-u(x)}{|\vartheta|},\qquad
B:= v(x+\vartheta)-v(x),\\
&&
a:=\frac{u(x-\vartheta)-u(x)}{|\vartheta|},\qquad
b:= v(x-\vartheta)-v(x)
.\end{eqnarray*}
Then, by~\eqref{REGA},
if~$\varrho:=\frac12\min\{1, |x|\}$ and~$\vartheta\in B_\varrho(x)$,
\begin{eqnarray*}
&& |b|\le \const |\vartheta|,\\
&& |A+a|= \frac{|u(x+\vartheta)+u(x-\vartheta)-2u(x)|}{|\vartheta|}\le
\const |\vartheta|\\{\mbox{and }}&&
|B+b|=|v(x+\vartheta)+v(x-\vartheta)-2v(x)|\le\const|\vartheta|^{2}.
\end{eqnarray*}
As a result, for any~$\vartheta\in B_\varrho\subset\R^n$,
\begin{eqnarray*}
&& \frac{
F'\left( \frac{u(x+\vartheta)-u(x)}{|\vartheta|}\right)\,\Big( v(x+\vartheta)-v(x)\Big)+
F'\left( \frac{u(x-\vartheta)-u(x)}{|\vartheta|}\right)\,\Big( v(x-\vartheta)-v(x)\Big)
}{|\vartheta|^{n+s+1} }\le\frac{\const |\vartheta|^{2}}{|\vartheta|^{n+s+1}},
\end{eqnarray*}
which, as a function of~$\vartheta$, belongs to~$L^1(B_\varrho)$.
These considerations give that the integral in~\eqref{E:4}
is well-defined.

Furthermore, for all~$h\in\R$ with~$|h|$ small, if~$\vartheta\in B_\varrho$,
$$ u(x+h e_j\pm\vartheta)-u(x+h e_j)
=u(x\pm\vartheta)-u(x)+h
\int_0^1 v(x+the_j\pm \vartheta)-v(x+the_j)\,dt$$
and therefore
\begin{eqnarray*}
&& F\left( \frac{u(x+h e_j\pm\vartheta)-u(x+h e_j)}{|\vartheta|}\right)
\\ &=&
F\left( \frac{u(x\pm\vartheta)-u(x)}{|\vartheta|}
+\frac{h}{|\vartheta|} 
\int_0^1 v(x+t h e_j\pm \vartheta)-v(x+the_j)\,dt
\right)
\\ &=&
F\left( \frac{u(x\pm\vartheta)-u(x)}{|\vartheta|}\right)\\&&\quad
+\int_0^1
F'\left( \frac{u(x\pm\vartheta)-u(x)}{|\vartheta|}
+\frac{\tau h}{|\vartheta|} 
\int_0^1 v(x+t h e_j\pm \vartheta)-v(x+the_j)\,dt
\right)\,d\tau\\
&&\quad\cdot
\frac{h}{|\vartheta|} 
\int_0^1 v(x+t h e_j\pm \vartheta)-v(x+the_j)\,dt.
\end{eqnarray*}
Hence, we exploit~\eqref{AB:AV} with
\begin{eqnarray*}
&& A:=\frac{u(x+\vartheta)-u(x)}{|\vartheta|}
+\frac{\tau h}{|\vartheta|} 
\int_0^1 v(x+t h e_j+\vartheta)-v(x+the_j)\,dt\\
&&
B:=\frac{1}{|\vartheta|} 
\int_0^1 v(x+t h e_j+ \vartheta)-v(x+the_j)\,dt\\
&& a:=\frac{u(x-\vartheta)-u(x)}{|\vartheta|}
+\frac{\tau h}{|\vartheta|} 
\int_0^1 v(x+t h e_j-\vartheta)-v(x+the_j)\,dt\\
{\mbox{and }}&&
b:=\frac{1}{|\vartheta|} 
\int_0^1 v(x+t h e_j-\vartheta)-v(x+the_j)\,dt.
\end{eqnarray*}
We remark that, in this case,
\begin{eqnarray*}
&&|A+a| \\&\le&
\frac{|u(x+\vartheta)+u(x-\vartheta)-2u(x)|}{|\vartheta|}
+\frac{|h|}{|\vartheta|} 
\int_0^1 \big|v(x+t h e_j+\vartheta)+v(x+t h e_j-\vartheta)-2v(x+the_j)\big|\,dt\\
&\le&\const \min\{1,\,|\vartheta|\}.
\end{eqnarray*}
Also,
$$ |b|\le \const$$
and
\begin{eqnarray*}
|B+b|&\le&
\frac{1}{|\vartheta|} 
\int_0^1\big| v(x+t h e_j+ \vartheta)+ v(x+t h e_j- \vartheta)-2v(x+the_j)\big|\,dt\\
&\le&\const\min\{1,\,|\vartheta|\}.\end{eqnarray*}
In this way, using~\eqref{AB:AV}, we find that
if
$$ {\mathcal{J}}(x,\vartheta):=
\frac{ F\left( \frac{u(x+\vartheta)-u(x)}{|\vartheta|}\right)
+ F\left( \frac{u(x-\vartheta)-u(x)}{|\vartheta|}\right) }{|\vartheta|^{n+s}},$$
then
\begin{eqnarray*}
&&\left|\frac{{\mathcal{J}}(x+he_j,\vartheta)-{\mathcal{J}}(x,\vartheta)}h\right|\\
&=& \frac{1}{|h|\,|\vartheta|^{n+s}}\left|
\int_0^1
F'\left( \frac{u(x+\vartheta)-u(x)}{|\vartheta|}
+\frac{\tau h}{|\vartheta|} 
\int_0^1 v(x+t h e_j+ \vartheta)-v(x+the_j)\,dt
\right)\,d\tau\right.\\
&&\quad\cdot
\frac{h}{|\vartheta|} 
\int_0^1 v(x+t h e_j+ \vartheta)-v(x+the_j)\,dt\\ &&\quad+
F'\left( \frac{u(x-\vartheta)-u(x)}{|\vartheta|}
+\frac{\tau h}{|\vartheta|} 
\int_0^1 v(x+t h e_j- \vartheta)-v(x+the_j)\,dt
\right)\,d\tau\\
&&\quad\cdot\left.
\frac{h}{|\vartheta|} 
\int_0^1 v(x+t h e_j- \vartheta)-v(x+the_j)\,dt\right|\\
&=& \frac{1}{|\vartheta|^{n+s}}\left|\int_0^1
F'(A)B-F'(a)b\,d\tau
\right|\\&\le&\int_0^1
\frac{\const \Big( |B+b|+ |A+a|\,|b|\Big)}{|\vartheta|^{n+s}}\,d\tau\\&\le&
\frac{\const \min\{1,\,|\vartheta|\}}{|\vartheta|^{n+s}}
\end{eqnarray*}
and the latter function belongs to~$ L^1(\R^n)$.
This says that we can make use of the Dominated Convergence Theorem and conclude that
\begin{eqnarray*} &&
\lim_{h\to0} \frac{
{\mathcal{F}}[u](x+he_j)-
{\mathcal{F}}[u](x+h) }{h}
\\&=&\lim_{h\to0} \frac1h\left[
\int_{\R^n}\frac{
F\left( \frac{u(x+he_j+\vartheta)-u(x)}{|\vartheta|}\right)
+F\left( \frac{u(x+he_j-\vartheta)-u(x)}{|\vartheta|}\right)
}{|\vartheta|^{n+s} }-\frac{
F\left( \frac{u(x+\vartheta)-u(x)}{|\vartheta|}\right)+
F\left( \frac{u(x-\vartheta)-u(x)}{|\vartheta|}\right)
}{|\vartheta|^{n+s} }\,d\vartheta
\right]\\
&=&\lim_{h\to0} \int_{\R^n} \frac{
{\mathcal{J}}(x+he_j,\vartheta)- {\mathcal{J}}(x,\vartheta)}{h}\,d\vartheta\\
&=& \int_{\R^n} \lim_{h\to0}\frac{
{\mathcal{J}}(x+he_j,\vartheta)- {\mathcal{J}}(x,\vartheta)}{h}\,d\vartheta\\
&=&  \int_{\R^n} \partial_{x_j} 
{\mathcal{J}}(x,\vartheta)\,d\vartheta\\&=&
\int_{\R^n}\frac{
F'\left( \frac{u(x+\vartheta)-u(x)}{|\vartheta|}\right)\,\Big( v(x+\vartheta)-v(x)\Big)+
F'\left( \frac{u(x-\vartheta)-u(x)}{|\vartheta|}\right)\,\Big( v(x-\vartheta)-v(x)\Big)
}{|\vartheta|^{n+s+1} }\,d\vartheta
.\end{eqnarray*}
This establishes~\eqref{E:4}, as desired.~\hfill$\Box$

\end{appendix}

\vfill

\end{document}